\theoremstyle{definition}
\newtheorem{dfn}{DEFINITION}[section]
\newtheorem{cnv}[dfn]{CONVENTION}
\newtheorem{rec+dfn}[dfn]{REMINDER \& DEFINITION}
\newtheorem{rem}[dfn]{REMARK}
\newtheorem{ntn}[dfn]{NOTATION}
\newtheorem{ntn+rem}[dfn]{NOTATION \& REMARK}
\newtheorem{exa}[dfn]{EXAMPLE}
\theoremstyle{plain}
\newtheorem{thm}[dfn]{THEOREM}
\newtheorem{lem}[dfn]{LEMMA}
\newtheorem{cor}[dfn]{COROLLARY}
\newtheorem{pro}[dfn]{PROPOSITION}
\DeclareMathAlphabet{\mathpzc}{OT1}{pzc}{m}{it}
\DeclareMathOperator{\MO}{MO}
\DeclareMathOperator{\NO}{NO}
\DeclareMathOperator{\SO}{SO}
\DeclareMathOperator{\TO}{TO}
\DeclareMathOperator{\ls}{ls}
\DeclareMathOperator{\LS}{LS}
\DeclareMathOperator{\lt}{lm}
\DeclareMathOperator{\LT}{LM}
\DeclareMathOperator{\Img}{Im}
\DeclareMathOperator{\Supp}{Supp}
\DeclareMathOperator{\supp}{supp}
\renewcommand{\epsilon}{\varepsilon}
\renewcommand{\varsigma}{\sigma}
\renewcommand{\imath}{\iota}
\renewcommand{\Im}{\Img}
\newcommand{\NN}{\mathbb{N}}
\newcommand{\RR}{\mathbb{R}}
\newcommand{\dd}{\partial}
\newcommand{\csi}{\xi}
\newcommand{\wo}{\smallsetminus}
\renewcommand{\setminus}{\smallsetminus}
\renewcommand{\emptyset}{\varnothing}
\newcommand{\concept}[1]{\textbf{\textit{#1}}}
\newcommand{\res}{ \text{$\upharpoonright$} }
\newcommand{\und}{\,\mathop{\wedge}\,}
\newcommand{\oder}{\,\mathop{\vee}\,}
\newcommand{\dann}{\Rightarrow}
\newcommand{\gdw}{\Leftrightarrow}
\begin{document}

\relpenalty 9999 
\binoppenalty 10000

\begin{center}
{\Large\textbf{UNIVERSAL GR\"OBNER BASES IN WEYL ALGEBRAS}}
\normalsize
\vskip\baselineskip
\textnormal{ROBERTO BOLDINI}
\vskip\baselineskip
\textnormal{INSTITUTE OF MATHEMATICS OF THE UNIVERSITY OF ZURICH}
\footnote{My gratitude to Prof.~em.~Dr.~Markus Brodmann and Prof.~Dr.~Joseph Ayoub, University of Zurich.}
\vskip\baselineskip
\textnormal{NOVEMBER 2009}
\end{center}

\vskip 1.65\baselineskip





\begin{center}
\begin{minipage}{0.69\linewidth}
\Small{
A topological space $\TO(S)$ of total orderings on any given set $S$ is introduced
and it is shown that $\TO(S)$ is compact. 
The set $\NO(N)$ of all normal orderings of the $n^\text{th}$ Weyl algebra $W$
is a closed subspace of $\TO(N)$,
where $N$ is the set of all normal monomials of $W$.
Hence $\NO(N)$ is compact and, as a consequence of this fact and by a division theorem valid in $W$,
we give a proof that each left ideal of $W$ admits a universal Gr\"obner basis.
These notes have been inspired by the beautiful article \cite{Sik} of A.~Sikora.}
\end{minipage}
\end{center}

\vskip 2\baselineskip


\renewcommand{\contentsname}{} 


\section{TOPOLOGICAL SPACES OF TOTAL ORDERINGS OF SETS}\label{sec1}

\noindent
In this section, let $S$ be a set.

\begin{dfn}
A \concept{total ordering} on $S$ is a binary relation $\preceq$ on $S$ such that
for all $a,b,c\in S$ it holds
antisymmetry: $a \preceq b \und b \preceq a \dann a = b$, 
transitivity: $a \preceq b \und b \preceq c \dann a \preceq c$, 
totality: $a \preceq b \oder b \preceq a$. 
Totality implies reflexivity: $a \preceq a$ for all $a\in S$.
The non-empty set of all total orderings on $S$ is denoted by $\TO(S)$.

Given any ordered pair $(a,b)\in S\times S$,
let $\mathfrak{U}_{(a,b)}$ be the set of all total orderings $\preceq$ on $S$ for which $a\preceq b$.
Let $\mathcal{U}$ be the coarsest topology on $\TO(S)$ for which all the sets $\mathfrak{U}_{(a,b)}$ are open.
This is the topology
for which $\{\mathfrak{U}_{(a,b)}\mid (a,b)\in S\times S\}$ is a subbasis, 
i.e.~the open sets in $\mathcal{U}$ are precisely the unions of finite intersections of sets
of the form $\mathfrak{U}_{(a,b)}$.
Observe that $\mathfrak{U}_{(a,a)}=\TO(S)$ and that $\mathfrak{U}_{(a,b)}=\TO(S)\wo \mathfrak{U}_{(b,a)}$ if~$a\ne b$,
so that the open sets $\mathfrak{U}_{(a,b)}$ are also closed.

Let $\mathbf{S}$ be any \concept{filtration} of $S$,
i.e.~a family $\mathbf{S}:=(S_i)_{i\in\NN_0}$ of subsets $S_i$ of $S$ with
${S_{0}=\emptyset}$,
${S_{i}\subseteq S_{i+1}}$ for all ${i\in\NN_0}$ and
$S=\bigcup_{i\in\NN_0}S_i$.
We define the function
$d_\mathbf{S}:\TO(S)\times\TO(S)\rightarrow\RR$ by putting
${d_\mathbf{S}(\preceq',\preceq''):= 2^{-r}}$ with
${r:=\sup{}\{i\in\NN_0\mid\mathrm{\preceq'}\res_{S_i}=\mathrm{\preceq''}\res_{S_i}\}}$,
where $\mathrm{\res}$ denotes restriction.
It holds $\{0\} \subseteq \Im(d_\mathbf{S}) \subseteq [0,1]$.
As $\mathbf{S}$ is exhaustive,
we have that ${d_\mathbf{S}(\preceq',\preceq'')=0}$ if and only if $\mathrm{\preceq'}=\mathrm{\preceq''}$.
Obviously, ${d_\mathbf{S}(\preceq',\preceq'')}={d_\mathbf{S}(\preceq'',\preceq')}$.
Finally, 
${d_\mathbf{S}(\preceq',\preceq''')}\le {d_\mathbf{S}(\preceq',\preceq'')}+{d_\mathbf{S}(\preceq'',\preceq''')}$,
because
${d_\mathbf{S}(\preceq',\preceq''')}\le\max{}\{{d_\mathbf{S}(\preceq',\preceq'')},{d_\mathbf{S}(\preceq'',\preceq''')}\}$.
Thus $d_\mathbf{S}$ is a metric on $\TO(S)$, dependent on the choice of the filtration $\mathbf{S}$ of $S$.
\end{dfn}

\begin{thm}\label{U=N}
Let $\mathbf{S}=(S_i)_{i\in\NN_0}$ be any filtration of $S$ such that each of the sets $S_i$ is finite.
Let $\mathcal{N}$ be the topology on $\TO(S)$ induced by the metric $d_\mathbf{S}$, 
i.e.~$\mathfrak{N}\in\mathcal{N}$ if and only if $\,\mathfrak{N}$ is a union of finite intersections of sets of the form
$\mathfrak{N}_{r}(\preceq):=\{\preceq'\in\TO(S)\mid d_\mathbf{S}(\preceq,\preceq')<2^{-r}\}$ with $r\in\NN_0$
and $\mathrm{\preceq}\in\TO(S)$.
Then $\mathcal{N}=\mathcal{U}$, in particular the topology $\mathcal{N}$ is independent of the 
chosen filtration $\mathbf{S}$ of $S$.
\end{thm}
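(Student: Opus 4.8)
I would prove $\mathcal N=\mathcal U$ by showing that the two topologies have comparable (sub)bases: each basic open set of one is open in the other. Since $\mathcal U$ is generated by the sets $\mathfrak U_{(a,b)}$ and $\mathcal N$ by the balls $\mathfrak N_r(\preceq)$, it suffices to prove (i) every $\mathfrak U_{(a,b)}$ lies in $\mathcal N$, and (ii) every ball $\mathfrak N_r(\preceq)$ lies in $\mathcal U$.

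Let me think about (i) first. Fix $(a,b)\in S\times S$. Because $\mathbf S$ is exhaustive, there is an index $i$ with $a,b\in S_i$; fix the least such $i$, and call it $r$. I claim $\mathfrak U_{(a,b)}$ is open in the metric topology: indeed, if $\mathrm{\preceq}\in\mathfrak U_{(a,b)}$, i.e.\ $a\preceq b$, then any $\mathrm{\preceq'}$ with $d_\mathbf S(\preceq,\preceq')<2^{-r}$ agrees with $\preceq$ on $S_r$, hence on the pair $(a,b)$, so $a\preceq' b$ and $\mathrm{\preceq'}\in\mathfrak U_{(a,b)}$. Thus $\mathfrak N_r(\preceq)\subseteq\mathfrak U_{(a,b)}$, and $\mathfrak U_{(a,b)}\in\mathcal N$. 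This shows $\mathcal U\subseteq\mathcal N$.

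For (ii), fix $r\in\NN_0$ and $\mathrm{\preceq}\in\TO(S)$; I must write $\mathfrak N_r(\preceq)$ as a finite intersection of sets $\mathfrak U_{(a,b)}$ (or their complements, which by the remark in the Definition are again of this form). By definition $\mathrm{\preceq'}\in\mathfrak N_r(\preceq)$ means $\mathrm{\preceq'}\res_{S_r}=\mathrm{\preceq}\res_{S_r}$, i.e.\ for every ordered pair $(a,b)\in S_r\times S_r$ one has $a\preceq' b$ exactly when $a\preceq b$. Since $S_r$ is finite, $S_r\times S_r$ is finite, and so
\[
\mathfrak N_r(\preceq)=\bigcap_{\substack{(a,b)\in S_r\times S_r\\ a\preceq b}}\mathfrak U_{(a,b)}
\]
is a finite intersection of subbasic open sets of $\mathcal U$, hence lies in $\mathcal U$. (One should double-check the boundary cases $r=0$, where $S_0=\emptyset$ and the intersection is empty, giving $\mathfrak N_0(\preceq)=\TO(S)$, consistent with $2^{-0}=1\ge d_\mathbf S$; and note that finiteness of $S_r$ is exactly where the hypothesis on $\mathbf S$ is used.) Combining (i) and (ii) gives $\mathcal N=\mathcal U$, and since $\mathcal U$ was defined without reference to any filtration, independence of $\mathbf S$ follows.

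The argument is essentially bookkeeping; the only place demanding a little care is making sure the equivalence ``agreeing on $S_r$'' $\Longleftrightarrow$ ``lying in that finite intersection of $\mathfrak U_{(a,b)}$'' is stated for \emph{ordered} pairs and uses totality (so that disagreement on $(a,b)$ is witnessed by one of $a\preceq' b$ failing while $a\preceq b$ holds, or vice versa) — but that is immediate. I do not anticipate a genuine obstacle; the finiteness of each $S_i$ is the single essential hypothesis, and it enters only in step (ii).
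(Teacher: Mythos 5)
Your overall strategy is exactly the paper's: show each ball $\mathfrak{N}_r(\preceq)$ is a finite intersection of subbasic sets $\mathfrak{U}_{(a,b)}$, and conversely that each $\mathfrak{U}_{(a,b)}$ contains a ball around each of its points. There is, however, a concrete off-by-one error in your reading of the metric, and it falsifies the key identity in your step (ii). Since $d_\mathbf{S}(\preceq,\preceq')=2^{-s}$ with $s=\sup\{i\in\NN_0\mid\mathrm{\preceq}\res_{S_i}=\mathrm{\preceq'}\res_{S_i}\}$, the \emph{strict} inequality $d_\mathbf{S}(\preceq,\preceq')<2^{-r}$ is equivalent to $s>r$, i.e.\ to agreement on $S_{r+1}$; agreement on $S_r$ only gives $d_\mathbf{S}(\preceq,\preceq')\le 2^{-r}$. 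Hence $\mathfrak{N}_r(\preceq)$ is the set of orderings agreeing with $\preceq$ on $S_{r+1}$, not on $S_r$, and your displayed equality
\[
\mathfrak N_r(\preceq)=\bigcap_{\substack{(a,b)\in S_r\times S_r\\ a\preceq b}}\mathfrak U_{(a,b)}
\]
fails in general: by totality and antisymmetry the right-hand side equals $\{\mathrm{\preceq'}\mid\mathrm{\preceq'}\res_{S_r}=\mathrm{\preceq}\res_{S_r}\}$, which also contains every ordering at distance exactly $2^{-r}$ from $\preceq$ (those agreeing on $S_r$ but not on $S_{r+1}$), whereas the open ball excludes them. Your boundary check at $r=0$ exhibits the same slip: $\mathfrak{N}_0(\preceq)=\{\mathrm{\preceq'}\mid d_\mathbf{S}(\preceq,\preceq')<1\}$ is the set of orderings agreeing with $\preceq$ on $S_1$, which is in general a proper subset of $\TO(S)$. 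So as written, step (ii) proves that a strictly larger set lies in $\mathcal{U}$ and does not establish $\mathfrak{N}_r(\preceq)\in\mathcal{U}$.

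The slip is harmless in your step (i), because there you only use the implication ``$d_\mathbf{S}(\preceq,\preceq')<2^{-r}$ implies agreement on $S_r$'', which is true (agreement on $S_{r+1}$ implies agreement on $S_r\subseteq S_{r+1}$). The repair of step (ii) is immediate: index the intersection over the ordered pairs $(a,b)\in S_{r+1}\times S_{r+1}$ with $a\preceq b$, using finiteness of $S_{r+1}$. After that correction your argument coincides with the paper's proof.
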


\begin{proof}
Let $r\in\NN_0$ and $\mathrm{\preceq} \in \TO(S)$.
We claim that $\mathfrak{N}_{r}(\preceq)\in\mathcal{U}$.
Let $\mathfrak{U}:=\bigcap_{(a,b)}\mathfrak{U}_{(a,b)}$,
where the intersection is taken over all ordered pairs $(a,b)\in S_{r+1}\times S_{r+1}$
with $a\preceq b$.
Then $\mathrm{\preceq}\in \mathfrak{U}\in\mathcal{U}$.
Hence $\mathrm{\preceq}'\in \mathfrak{N}_{r}(\preceq)$ if and only if
$\mathrm{\preceq'}\res_{S_{r+1}}\!=\,\mathrm{\preceq}\res_{S_{r+1}}$,
and this is the case if and only if it holds $a\preceq' b\gdw a\preceq b$ for all $(a,b)\in S_{r+1} \times S_{r+1}$,
which is true if and only if $\mathrm{\preceq'}\in \mathfrak{U}$.
Thus $\mathfrak{N}_r(\preceq)=\mathfrak{U}$, and this shows that $\mathcal{N}\subseteq\mathcal{U}$.

On the other hand, let $(a,b)\in S\times S$ be any ordered pair. 
We claim that the set $\mathfrak{U}_{(a,b)}$ is open with respect to the metric $d_\mathbf{S}$.
Let $\mathrm{\preceq}\in \mathfrak{U}_{(a,b)}$, so that $a\preceq b$.
We find $r\in\NN_0$ such that $(a,b)\in S_{r+1}\times S_{r+1}$.
If $\mathrm{\preceq'}\in \mathfrak{N}_r(\preceq)$,
then $\mathrm{\preceq'}\res_{S_{r+1}}\! = \mathrm{\preceq}\res_{S_{r+1}}$,
in particular $a\preceq' b$, so that $\mathrm{\preceq'}\in \mathfrak{U}_{(a,b)}$,
thus $\mathfrak{N}_r(\preceq)\subseteq \mathfrak{U}_{(a,b)}$.
Hence $\mathfrak{U}_{(a,b)}$ is open with respect to $\mathcal{N}$,
and we conclude that $\mathcal{U}\subseteq\mathcal{N}$.
\end{proof}

\begin{cnv}
From now on, when we say ``the topological space $\TO(S)$'',
we always mean $\TO(S)$ endowed with the topology $\mathcal{U}$.
With ``topological subspace of $\TO(S)$'' we always intend a subset of $\TO(S)$
provided with its relative topology with respect to~$\mathcal{U}$.
\end{cnv}

\begin{dfn}\label{axiom}
An \concept{ultrafilter} $U$ on a given set $X$ is a family of subsets of $X$ such that
(a)~$\emptyset\notin U$,
(b)~$A\subseteq B\subseteq X\und A\in U\dann B\in U$,
(c)~$A\in U\und B\in U\dann A\cap B\in U$,
(d)~$A\subseteq X\dann A\in U \oder X\wo A\in U$.
\end{dfn}

\begin{thm}\label{compact}
The topological space $\TO(S)$ is compact.%
\footnote{The existence and proof of Theorem~\ref{compact} were kindly communicated to the 
author by Prof.~Dr.~Matthias Aschenbrenner, University of California, Los Angeles.}
\end{thm}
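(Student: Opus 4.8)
The plan is to exhibit $\TO(S)$ as a closed subspace of a product of finite discrete spaces, and then invoke Tychonoff; equivalently (and this is the route I would actually write out, since the excerpt has just introduced ultrafilters) to verify the ultrafilter criterion for compactness directly. Concretely, for each ordered pair $(a,b)\in S\times S$ the set $\mathfrak{U}_{(a,b)}$ is clopen, so the subbasic closed sets coincide with the subbasic open sets; by Alexander's subbase lemma it suffices to show that every cover of $\TO(S)$ by sets of the form $\mathfrak{U}_{(a,b)}$ (and their complements $\mathfrak{U}_{(b,a)}$) admits a finite subcover. I would instead phrase it via ultrafilters: given an ultrafilter $U$ on $\TO(S)$, I must produce a total ordering $\preceq$ on $S$ such that every neighbourhood of $\preceq$ lies in $U$.

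The key construction: for each pair $(a,b)\in S\times S$, exactly one of $\mathfrak{U}_{(a,b)}$, $\mathfrak{U}_{(b,a)}$... more carefully, when $a\neq b$ we have $\mathfrak{U}_{(a,b)}=\TO(S)\wo\mathfrak{U}_{(b,a)}$, so by axiom (d) of Definition~\ref{axiom} exactly one of these two clopen sets belongs to $U$. Define $a\preceq b$ to mean $\mathfrak{U}_{(a,b)}\in U$ (for $a=b$ this holds vacuously since $\mathfrak{U}_{(a,a)}=\TO(S)\in U$ by (b), as $\emptyset\notin U$ forces $\TO(S)\in U$). Now I check the three axioms of a total ordering. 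Totality is immediate from the previous sentence. For antisymmetry, if $a\preceq b$ and $b\preceq a$ with $a\neq b$, then $\mathfrak{U}_{(a,b)}\cap\mathfrak{U}_{(b,a)}\in U$ by (c); but this intersection is the set of total orderings with both $a\preceq b$ and $b\preceq a$, which by antisymmetry of orderings on $S$ is empty, contradicting (a). For transitivity, if $a\preceq b$ and $b\preceq c$ then $\mathfrak{U}_{(a,b)}\cap\mathfrak{U}_{(b,c)}\in U$, and this set is contained in $\mathfrak{U}_{(a,c)}$ (any ordering with $a\preceq b\preceq c$ has $a\preceq c$), so $\mathfrak{U}_{(a,c)}\in U$ by (b), i.e.\ $a\preceq c$. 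Hence $\preceq\in\TO(S)$.

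It remains to see that $U$ converges to this $\preceq$. A basic open neighbourhood of $\preceq$ is a finite intersection $\bigcap_{i=1}^k\mathfrak{U}_{(a_i,b_i)}$ with $a_i\preceq b_i$ for each $i$; by construction each $\mathfrak{U}_{(a_i,b_i)}\in U$, so the intersection lies in $U$ by finitely many applications of (c), and every open neighbourhood of $\preceq$ contains such a basic one, hence lies in $U$ by (b). Thus every ultrafilter on $\TO(S)$ converges, which is equivalent to compactness.

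The only genuine subtlety — and the step I would be most careful about — is the bookkeeping around $a=b$ and the passage from ``exactly one of $\mathfrak{U}_{(a,b)},\mathfrak{U}_{(b,a)}$ is in $U$'' (valid for $a\neq b$) to a well-defined relation; everything else is a mechanical transfer of the ordering axioms through the ultrafilter axioms (a)–(d), with antisymmetry using (a)+(c) and transitivity using (b)+(c). One should also note at the outset, for the ultrafilter-convergence criterion to apply, that $\TO(S)$ is nonempty (stated in Definition~\ref{axiom}'s predecessor) so that ultrafilters exist and the criterion is not vacuous. If one instead prefers the Tychonoff route, the same relation $\preceq$ appears as the limit point extracted from $\{0,1\}^{S\times S}$, and the three order axioms are exactly the closed conditions cutting $\TO(S)$ out of the cube.
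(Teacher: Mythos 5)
Your proof is correct, and it takes a genuinely different route from the paper's. The paper argues by contradiction: it (implicitly, via Alexander's subbase lemma) reduces to a subbasic cover $(\mathfrak{U}_{(a_i,b_i)})_{i\in I}$ with no finite subcover, picks for each finite $s\subseteq I$ a witness $\preceq_s$ avoiding the corresponding sets, builds an ultrafilter on the set of finite subsets of $I$ via the Ultrafilter Lemma, and takes the ultralimit of the net $(\preceq_s)$ to produce an ordering outside the whole cover. You instead take an arbitrary ultrafilter $U$ on $\TO(S)$ itself, define the limit ordering directly by $a\preceq b:\Leftrightarrow\mathfrak{U}_{(a,b)}\in U$, and invoke the criterion that a space is compact iff every ultrafilter converges. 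The verifications are essentially parallel --- in both proofs antisymmetry comes from axioms (a)+(c), transitivity from (b)+(c), totality from (d) --- but your version dispenses with the auxiliary index set, the choice of witnesses, and the unstated appeal to Alexander's lemma, at the cost of invoking the ultrafilter-convergence characterization of compactness (whose nontrivial direction itself uses the Ultrafilter Lemma, so the set-theoretic strength is the same). Your closing remark that $\preceq$ is the limit point one would extract from the closed embedding $\TO(S)\hookrightarrow\{0,1\}^{S\times S}$ correctly identifies the common mechanism behind both arguments. The one point to polish is the derivation of $\TO(S)\in U$: it follows from (d) applied to $A=\emptyset$ together with (a), rather than from (b) alone.
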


\begin{proof}
Suppose by contradiction that $\TO(S)$ is not compact.
Then we find an infinite index set $I$ and families $(a_i)_{i\in I}$ and $(b_i)_{i\in I}$
of elements $a_i,b_i\in S$ such that $(\mathfrak{U}_{(a_i,b_i)})_{i\in I}$ is a covering of
$\TO(S)$ which admits no finite subcovering.
Thus for each finite subset $s\subseteq I$ there exists ${\preceq_s}\in\TO(S)$ such that
${\preceq_s}\notin\bigcup_{i\in s}\mathfrak{U}_{(a_i,b_i)}$,
i.e.~for all $i\in s$ it holds $a_i\succ_s b_i$.

Let $I^\ast$ be the set of all non-empty finite subsets of $I$.
For each $s\in I^\ast$ let $s^\ast := \{t\in I^\ast\mid s\subseteq t\}$.
As $s_1^\ast\cap s_2^\ast=(s_1\cup s_2)^\ast$ for all $s_1,s_2\in I^\ast$,
the family $(s^\ast\mid s\in I^\ast)$ is closed under finite intersections.
Hence, by the Ultrafilter Lemma, there exists an ultrafilter $U$ on $I^\ast$ such that
$s^\ast\in U$ for all $s\in I^\ast$.

Let ${\preceq}$ be a binary relation on $S$ defined by
$a\preceq b :\Leftrightarrow \{s\in I^\ast\mid a\preceq_s b\}\in U$.
By axioms (c) and (a) of \ref{axiom}, ${\preceq}$ is antisymmetric.
By axioms (c) and (b) of \ref{axiom}, ${\preceq}$ is transitive.
By axioms (d) and (b) of \ref{axiom}, ${\preceq}$ is total.
So ${\preceq}\in\TO(S)$.
On the other hand, by our choice of the orderings ${\preceq_s}$,
it holds $a_i\succ b_i$ for all $i\in I$,
thus ${\preceq}\notin\bigcup_{i\in I}\mathfrak{U}_{(a_i,b_i)}=\TO(S)$, a contradiction.
\end{proof}

\begin{thm}\label{closed}
For each $a\in S$ the set
${}\SO_{a}(S):=\{\mathrm{\preceq}\in\TO(S) \mid \forall\,b\in S :a\preceq b\}$
is closed in $\TO(S)$.
Hence 
the topological subspace $\SO_{a}(S)$ of ${}\TO(S)$ is compact.
\end{thm}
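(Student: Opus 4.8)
The plan is to realise $\SO_a(S)$ as an intersection of sets already known to be closed, and then to deduce compactness from Theorem~\ref{compact}.

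First I would unwind the definition: an ordering $\preceq\in\TO(S)$ belongs to $\SO_a(S)$ exactly when $a\preceq b$ for every $b\in S$, i.e.\ when $\preceq\in\mathfrak{U}_{(a,b)}$ for every $b\in S$, so that
\[
\SO_a(S)=\bigcap_{b\in S}\mathfrak{U}_{(a,b)}.
\]
As remarked when the topology $\mathcal{U}$ was introduced, each $\mathfrak{U}_{(a,b)}$ is clopen in $\TO(S)$: indeed $\mathfrak{U}_{(a,a)}=\TO(S)$, while for $b\ne a$ the set $\mathfrak{U}_{(a,b)}=\TO(S)\wo\mathfrak{U}_{(b,a)}$ is the complement of a subbasic open set, hence closed. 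Thus $\SO_a(S)$ is an intersection of closed subsets of $\TO(S)$, and therefore closed. Since the term $\mathfrak{U}_{(a,a)}=\TO(S)$ contributes nothing, one may equivalently intersect over $b\in S\wo\{a\}$ only.

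For the final assertion I would invoke the textbook fact that a closed subspace of a compact space is compact, combined with Theorem~\ref{compact}: given any cover of $\SO_a(S)$ by relatively open sets, extend each to an open subset of $\TO(S)$, adjoin the open set $\TO(S)\wo\SO_a(S)$, extract a finite subcover of $\TO(S)$ by compactness, and discard the extra member. I foresee no real obstacle; the only point meriting a line of care is the degenerate pair $b=a$ in the displayed intersection, which is handled by $\mathfrak{U}_{(a,a)}=\TO(S)$, and otherwise everything is a straight translation of the definitions plus the standard compactness argument.
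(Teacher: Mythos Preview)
Your proof is correct and follows essentially the same route as the paper: both express $\SO_a(S)$ as $\bigcap_{b\in S}\mathfrak{U}_{(a,b)}$, use the clopenness of the subbasic sets (equivalently, rewrite the intersection as the complement of the open union $\bigcup_{b\ne a}\mathfrak{U}_{(b,a)}$), and then invoke Theorem~\ref{compact} together with the fact that closed subspaces of compact spaces are compact.
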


\begin{proof}
It holds $\SO_{a}(S)=\bigcap_{b\in S}\mathfrak{U}_{(a,b)}=\bigcap_{b\in S\wo\{a\}}\mathfrak{U}_{(a,b)}
=\TO(S)\wo\bigcup_{b\in S\wo\{a\}}\mathfrak{U}_{(b,a)}$, thus $\SO_a(S)$ is closed in $\TO(S)$.
As $\TO(S)$ is compact by \ref{compact},
the closedness of $\SO_a(S)$ in $\TO(S)$ implies that $\SO_a(S)$ with the relative topology is 
compact.
\end{proof}

\vskip\baselineskip

\section{UNIVERSAL GR\"OBNER BASES IN WEYL ALGEBRAS}\label{sec3}

\noindent
In this section, let $K$ be a field of characteristic zero and $n\in\NN$.

We denote by $W$ the $n^\text{th}$ Weyl algebra $K\langle \xi_1,\ldots,\xi_n,\dd_1,\ldots,\dd_n\rangle$ with
${\xi_i\xi_j-\xi_j\xi_i=0}$ and ${\dd_i\dd_j-\dd_j\dd_i=0}$ and ${\dd_i\xi_j-\xi_j\dd_i=\delta_{ij}}$ over $K$,
where $\delta_{ij}\in K$ is the Kronecker delta.
We remind that $W$ is a central simple left noetherian $K$-algebra and a domain, see~\cite{Bor} and~\cite{Cou}.

We write $K[X,Y]$ for the commutative polynomial ring over $K$ in the indeterminates $X_1,\ldots,X_n$ and $Y_1,\ldots,Y_n$.

\begin{rec+dfn}\label{Supp}
The countable set $N:=\{\xi^\lambda\dd^\mu\mid(\lambda,\mu)\in\NN_0^n\times\NN_0^n\}$ of the
\concept{normal monomials} of $W$ is a  basis of the free $K$-module $W$.

Thus each  $w\in W$ can be  written  in \concept{canonical form}
as a sum $\sum_{(\lambda,\mu)\in\supp(w)}c_{(\lambda,\mu)}\xi^\lambda\dd^\mu$
for a uniquely determined finite subset $\supp(w)$ of $\NN_0^n\times\NN_0^n$ such that $c_{(\lambda,\mu)}\in K\wo\{0\}$
for all $(\lambda,\mu)\in\supp(w)$.

For each $w\in W$ we define the set $\Supp(w):=\{\csi^\lambda\dd^\mu\in N\mid (\lambda,\mu)\in\supp(w)\}$, which we call
the \concept{support} of $w$.
For each subset $V$ of $W$ we put $\Supp(V):=\bigcup_{v\in V}\Supp(v)$.
\end{rec+dfn}

\begin{dfn}
A \concept{normal  ordering} of $W$,
or \concept{term ordering} of $W$ according to the definition  in~\cite{Sai},
is a total ordering $\preceq$ on $N$ such that
for all $\lambda,\mu,\rho,\sigma,\alpha,\beta\in\NN_0^n$
it holds
well-foundedness:~${1\preceq \xi^\lambda\dd^\mu}$,
and
compatibility:~${\xi^\lambda\dd^\mu\preceq\xi^\rho\dd^\sigma
\dann
\xi^{\lambda+\alpha}\dd^{\mu+\beta}\preceq\xi^{\rho+\alpha}\dd^{\sigma+\beta}}$.
Since $N$ is a $K$-basis of $W$, these axioms are e\-qui\-va\-lent to:
${1\prec \xi^\lambda\dd^\mu}$ for all $\lambda,\mu\in\NN_0^n$ with $(\lambda,\mu)\ne (0,0)$,
and
${\xi^\lambda\dd^\mu\prec\xi^\rho\dd^\sigma
\dann
\xi^{\lambda+\alpha}\dd^{\mu+\beta}\prec\xi^{\rho+\alpha}\dd^{\sigma+\beta}}$
for all $\lambda,\mu,\rho,\sigma,\alpha,\beta\in\NN_0^n$.
The set of all normal orderings of $W$ is denoted by $\NO(N)$.
\end{dfn}

\begin{exa}
The \concept{lexicographical ordering} $\mathrm{\preceq_\mathrm{lex}}$ on $N$
defined by
\[
{\xi^\lambda\dd^\mu\preceq_\mathrm{lex}\xi^\rho\dd^\sigma }
\gdw
{(\lambda = \rho \und \mu = \sigma)} 
\oder
{(\lambda = \rho \und \mu \ne \sigma \und \mu_{i( \mu,\sigma )} < \sigma_{i( \mu,\sigma )})}
\oder
{(\lambda \ne \rho \und \lambda_{i( \lambda,\rho )} < \rho_{i( \lambda,\rho )})}
\]
for all $\lambda,\mu,\rho,\sigma\in\NN_0^n$,
where we put $i(\alpha,\beta):=\min{}\{\,j\mid 1\le j\le n \,\wedge\, \alpha_j\ne\beta_j\,\}$
for all $\alpha,\beta\in\NN_0^n$ with $\alpha\ne\beta$,
is a normal ordering of $W$. 
\end{exa}

\begin{rec+dfn}\label{Supp2}
The countable set $M:=\{X^\lambda Y^\mu\mid(\lambda,\mu)\in\NN_0^n\times\NN_0^n\}$ of the
\concept{monomials} of $K[X,Y]$ is a basis of the free $K$-module $K[X,Y]$.

Thus each  $p\in K[X,Y]$ can be  written  in \concept{canonical form}
as a sum $\sum_{(\lambda,\mu)\in\supp(p)}c_{(\lambda,\mu)}X^\lambda Y^\mu$
for a uniquely determined finite subset $\supp(p)$ of $\NN_0^n\times\NN_0^n$ such that $c_{(\lambda,\mu)}\in K\wo\{0\}$
for all $(\lambda,\mu)\in\supp(p)$.

For each $p\in K[X,Y]$ we define the set $\Supp(p):=\{X^\lambda Y^\mu\in M\mid (\lambda,\mu)\in\supp(p)\}$, which we call
the \concept{support} of $p$.
For each subset $U$ of $K[X,Y]$ we put $\Supp(U):=\bigcup_{u\in U}\Supp(u)$.
\end{rec+dfn}

\begin{dfn}
According to~\cite{Wat} and equivalently to~\cite{Cox},
a \concept{monomial ordering} of $K[X,Y]$
is a total ordering $\leq$ on $M$ such that
for all $\lambda,\mu,\rho,\sigma,\alpha,\beta\in\NN_0^n$
it holds
well-foundedness:~${1\leq X^\lambda Y^\mu}$,
and
compatibility:~${X^\lambda Y^\mu\leq X^\rho Y^\sigma
\dann
X^{\lambda+\alpha}Y^{\mu+\beta}\leq X^{\rho+\alpha}Y^{\sigma+\beta}}$.
Equivalently:
 ${1< X^\lambda Y^\mu}$ for all $\lambda,\mu\in\NN_0^n$ with $(\lambda,\mu)\ne(0,0)$,
and
 ${X^\lambda Y^\mu < X^\rho Y^\sigma
\dann
X^{\lambda+\alpha}Y^{\mu+\beta} < X^{\rho+\alpha}Y^{\sigma+\beta}}$
for all $\lambda,\mu,\rho,\sigma,\alpha,\beta\in\NN_0^n$.
The set of all monomial orderings of $K[X,Y]$ is denoted by $\MO(M)$.
\end{dfn}

\begin{rem}\label{Phi}
There exists an isomorphism of $K$-modules $\Phi:W\rightarrow K[X,Y]$ which maps the basis $N$ of $W$ to the basis
$M$ of $K[X,Y]$ by the rule $\xi^\lambda\dd^\mu\mapsto X^\lambda Y^\mu$.
\end{rem}

\begin{thm}\label{phi}
The $K$-isomorphism $\Phi$ induces a homeomorphism $\phi:\NO(N)\rightarrow\MO(M)$ given by ${{\preceq}\mapsto{\leq}}$
where ${\leq}$ is defined by
$X^\lambda Y^\mu \leq X^\rho Y^\sigma :\Leftrightarrow \Phi^{-1}(X^\lambda Y^\mu) \preceq \Phi^{-1}(X^\rho Y^\sigma)$.
\end{thm}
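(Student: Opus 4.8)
The plan is to recognise $\phi$ as the pushforward of total orderings along the bijection $\Phi\restricted_N\colon N\to M$, $\xi^\lambda\dd^\mu\mapsto X^\lambda Y^\mu$ --- this \emph{is} a bijection by Remark~\ref{Phi} together with the fact that, $N$ and $M$ being $K$-bases, both are parametrised bijectively by $\NN_0^n\times\NN_0^n$ through the canonical form --- and then to verify in turn: (i)~such a pushforward is a homeomorphism $\TO(N)\to\TO(M)$; (ii)~it carries the subset $\NO(N)$ onto the subset $\MO(M)$; (iii)~hence its restriction is a homeomorphism between $\NO(N)$ and $\MO(M)$ for the relative topologies, which by our convention are precisely the topologies these two spaces carry.

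For step~(i) I would argue in the generality of an arbitrary bijection $f\colon S\to S'$. It induces a map $f_\ast\colon\TO(S)\to\TO(S')$ sending $\preceq$ to the relation $\preceq'$ given by $a'\preceq' b':\gdw f^{-1}(a')\preceq f^{-1}(b')$. Because $f^{-1}$ is a bijection, antisymmetry, transitivity and totality of $\preceq'$ are immediate consequences of those of $\preceq$, so $f_\ast$ is well defined, and it is bijective with inverse $(f^{-1})_\ast$. For the topologies one computes $f_\ast^{-1}\bigl(\mathfrak{U}_{(a',b')}\bigr)=\mathfrak{U}_{(f^{-1}(a'),\,f^{-1}(b'))}$ for every pair $(a',b')$, i.e.~$f_\ast$ pulls each subbasic open set back to a subbasic open set, hence is continuous; the symmetric computation shows $f_\ast^{-1}=(f^{-1})_\ast$ is continuous as well, so $f_\ast$ is a homeomorphism. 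Taking $f=\Phi\restricted_N$ produces exactly the rule $X^\lambda Y^\mu\leq X^\rho Y^\sigma\gdw\Phi^{-1}(X^\lambda Y^\mu)\preceq\Phi^{-1}(X^\rho Y^\sigma)$ of the statement.

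For step~(ii), with $f=\Phi\restricted_N$ one has $f(\xi^\lambda\dd^\mu)=X^\lambda Y^\mu$ for all $(\lambda,\mu)$, and in particular $f(1)=1$. Hence, for $\preceq\in\TO(N)$ and ${\leq}:=f_\ast(\preceq)$, the well-foundedness condition ``$1\preceq\xi^\lambda\dd^\mu$ for all $(\lambda,\mu)$'' translates across $f$ literally into ``$1\leq X^\lambda Y^\mu$ for all $(\lambda,\mu)$'', and the compatibility implication for $\preceq$ translates literally into the compatibility implication for $\leq$, simply because the axioms defining a normal ordering of $W$ and those defining a monomial ordering of $K[X,Y]$ are one and the same combinatorial conditions on $\NN_0^n\times\NN_0^n$. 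Therefore $\preceq\in\NO(N)$ if and only if $f_\ast(\preceq)\in\MO(M)$, so $f_\ast$ restricts to a bijection $\NO(N)\to\MO(M)$, which is the map $\phi$. Step~(iii) is then the elementary fact that the restriction of a homeomorphism to a subspace, source and target being given the relative topologies, is a homeomorphism onto its image.

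I do not anticipate a real obstacle; the only point needing a little care is the bookkeeping in step~(ii), namely making explicit that ``well-foundedness'' and ``compatibility'' are phrased identically for $W$ and for $K[X,Y]$ and that $\Phi\restricted_N$ matches the two exponent parametrisations --- in particular that it sends the unit monomial to the unit monomial --- so that membership in $\NO(N)$ and in $\MO(M)$ does correspond under $f_\ast$.
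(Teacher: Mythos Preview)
Your proof is correct and follows essentially the same route as the paper's: both establish bijectivity and then verify continuity of $\phi$ and $\phi^{-1}$ by computing that subbasic open sets $\mathfrak{U}_{(a,b)}$ pull back (and push forward) to subbasic open sets. Your packaging is a bit cleaner---you first show that any bijection $f\colon S\to S'$ induces a homeomorphism $\TO(S)\to\TO(S')$ and then restrict to $\NO(N)$, whereas the paper argues directly on $\NO(N)\to\MO(M)$ and handles injectivity by a separate contradiction argument---but the mathematical content is the same.
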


\begin{proof}
It is immediate to check that $\phi(\preceq)$ is indeed a monomial ordering of $K[X,Y]$ for each 
normal ordering ${\preceq}$ of $W$, thus $\phi$ is well-defined.

Let ${\leq}\in\Im(\phi)$.
Suppose that there exist two distint
$\mathrm{\preceq}, \mathrm{\preceq'} \in \NO(N)$ such that $\phi(\preceq)={\leq}=\phi(\preceq')$.
There exist normal monomials $u,v\in N$ such that $u\succ v$ and $u\preceq' v$.
Then $v\preceq u$ by totality.
Hence $\Phi(v)\leq\Phi(u)$ and $\Phi(u)\leq\Phi(v)$.
So $\Phi(u)=\Phi(v)$ by antisymmetry.
It follows $u=v$ as $\Phi$ is injective.
But this contradicts the reflexivity of ${\preceq}$.
Therefore $\phi$ is injective.

Now let $\mathrm{\leq}\in\MO(M)$.
We define a binary relation $\mathrm{\preceq}$ on $N$ by setting $u\preceq v :\Leftrightarrow \Phi(u)\leq\Phi(v)$
for all $u,v\in N$.
One easily verifies that ${\preceq}\in\NO(N)$ and that $\phi(\preceq)={\leq}$, thus $\phi$ is surjective.

We check now that $\phi$ is continuous.
As $\phi$ is bijective, so that $\phi$ commutes with  intersections and unions,
it is sufficient to consider any  subbases of $\MO(M)$ and $\NO(N)$.
So, let $(a,b)\in M\times M$ and consider the open subset $\mathfrak{U}_{(a,b)}\cap\MO(M)$ of $\MO(M)$.
As
$\phi^{-1}(\mathfrak{U}_{(a,b)}\cap\MO(M))=\mathfrak{U}_{(\Phi^{-1}(a),\Phi^{-1}(b))}\cap\NO(N)$,
 $\phi$ is continuous.

Similarly, for all $(u,v)\in N\times N$ one has
$\phi(\mathfrak{U}_{(u,v)}\cap\NO(N))=\mathfrak{U}_{(\Phi(u),\Phi(v))}\cap\MO(M)$,
so that $\phi$ is open
and hence $\phi^{-1}$ is continuous.
\end{proof}

\begin{thm}\label{NO(N)}
In the notation of~\ref{closed}, $\NO(N)$ is a closed subset of ${}\SO_1(N)$.
Hence $\NO(N)$ is a compact topological subspace of ${}\SO_1(N)$.
\end{thm}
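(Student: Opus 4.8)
The plan is to show that $\NO(N)$ is closed in $\SO_1(N)$ by exhibiting it as an intersection of sets that are each closed in $\TO(N)$ (and hence in $\SO_1(N)$), and then to invoke Theorem~\ref{closed} together with the fact that a closed subspace of a compact space is compact. Recall that $\NO(N)$ consists of those total orderings $\preceq$ on $N$ satisfying well-foundedness ($1\preceq u$ for all $u\in N$) and compatibility ($\xi^\lambda\dd^\mu\preceq\xi^\rho\dd^\sigma\dann\xi^{\lambda+\alpha}\dd^{\mu+\beta}\preceq\xi^{\rho+\alpha}\dd^{\sigma+\beta}$ for all exponents). The well-foundedness condition says precisely that $\preceq$ lies in $\SO_1(N)$, so nothing more need be checked there; what remains is to handle the countably many compatibility constraints.

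First I would rewrite each individual compatibility implication as a Boolean combination of subbasic clopen sets. For fixed $\lambda,\mu,\rho,\sigma,\alpha,\beta\in\NN_0^n$, put $u=\xi^\lambda\dd^\mu$, $v=\xi^\rho\dd^\sigma$, $u'=\xi^{\lambda+\alpha}\dd^{\mu+\beta}$, $v'=\xi^{\rho+\alpha}\dd^{\sigma+\beta}$. The set of total orderings for which the implication ``$u\preceq v\dann u'\preceq v'$'' holds is
\[
\bigl(\TO(N)\wo\mathfrak{U}_{(u,v)}\bigr)\cup\mathfrak{U}_{(u',v')},
\]
which is a union of two clopen sets (each $\mathfrak{U}_{(a,b)}$ is clopen, as noted in the first definition), hence clopen in $\TO(N)$. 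Intersecting over all the (countably many) sextuples $(\lambda,\mu,\rho,\sigma,\alpha,\beta)$ gives a closed subset of $\TO(N)$; call it $C$. Then $\NO(N)=C\cap\SO_1(N)$, an intersection of a closed set with a closed set (Theorem~\ref{closed} gives that $\SO_1(N)$ is closed in $\TO(N)$), so $\NO(N)$ is closed in $\TO(N)$, and a fortiori closed in the subspace $\SO_1(N)$.

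For the final assertion, Theorem~\ref{closed} tells us that $\SO_1(N)$ is a compact topological space, and a closed subspace of a compact space is compact; since $\NO(N)$ is closed in $\SO_1(N)$ by the preceding paragraph, $\NO(N)$ is a compact topological subspace of $\SO_1(N)$. No genuine obstacle arises here: the only point deserving care is to confirm that the compatibility axiom really does cut out a closed set, i.e.\ that an arbitrary (countable) intersection of clopen sets is closed, which is immediate, and that one may pass freely between ``closed in $\TO(N)$'' and ``closed in the subspace $\SO_1(N)$'' via the relative topology. I would also remark explicitly that replacing $\preceq$ by the equivalent strict formulation (with $\prec$ and $(\lambda,\mu)\ne(0,0)$) would work just as well, since $\mathfrak{U}_{(a,b)}\wo\mathfrak{U}_{(b,a)}$ is again clopen.
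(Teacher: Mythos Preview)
Your proof is correct and takes a different route from the paper's. The paper argues via the metric description of the topology established in Theorem~\ref{U=N}: it fixes a filtration $(S_i)_{i\in\NN_0}$ of $N$ by finite sets, takes an accumulation point $\preceq\in\SO_1(N)$ of $\NO(N)$, and for each $r$ picks some $\preceq_r\in\NO(N)$ agreeing with $\preceq$ on $S_{r+1}$; given any compatibility instance involving $\xi^\lambda\dd^\mu,\xi^\rho\dd^\sigma,\xi^{\lambda+\alpha}\dd^{\mu+\beta},\xi^{\rho+\alpha}\dd^{\sigma+\beta}$, one chooses $r$ large enough that all four monomials lie in $S_{r+1}$ and transfers the implication from $\preceq_r$ to $\preceq$. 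Your argument bypasses the metric entirely and works directly with the clopen subbasis $\{\mathfrak{U}_{(a,b)}\}$, expressing each compatibility implication as the clopen set $(\TO(N)\wo\mathfrak{U}_{(u,v)})\cup\mathfrak{U}_{(u',v')}$ and intersecting. Your approach is shorter and makes transparent that only the clopen subbasis is needed for closedness, not the countability of $N$ or the existence of a finite-stage filtration; the paper's approach, by contrast, exercises the machinery of Theorem~\ref{U=N} and may read more naturally for someone thinking of convergence of orderings sequentially.
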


\begin{proof}
$\NO(N)$ is of course a subset of $\SO_1(N)$.
Let $(S_i)_{i\in\NN_0}$ be a filtration of $N$ with finite sets $S_i$.
Let $\mathrm{\preceq}\in\SO_1(N)$ be an accumulation point of $\NO(N)$.
Thus for each $r\in\NN_0$ there exists ${\mathrm{\preceq}_r\in\NO(N)\wo\{\mathrm{\preceq}\}}$ with
${\mathrm{\preceq}_r\in \mathfrak{N}_r(\preceq)\cap\SO_1(N)}$, so  $\mathrm{\preceq}_r$ and $\mathrm{\preceq}$
agree on $S_{r+1}$.
Choose any ${\lambda,\mu,\rho,\sigma,\alpha,\beta\in\NN_0^n}$ and
assume that, say, $\xi^\lambda\dd^\mu\preceq\xi^\rho\dd^\sigma$.
We find $r\in\NN_0$ such that
$\xi^\lambda\dd^\mu,\xi^\rho\dd^\sigma,\xi^{\lambda+\alpha}\dd^{\mu+\beta},\xi^{\rho+\alpha}\dd^{\sigma+\beta}\in S_{r+1}$.
There exists $\mathrm{\preceq}_r$ as above such that $\xi^\lambda\dd^\mu\preceq_r\xi^\rho\dd^\sigma$.
Since $\mathrm{\preceq}_r$ is a normal ordering of $W$, it follows
$\xi^{\lambda+\alpha}\dd^{\mu+\beta}\preceq_r\xi^{\rho+\alpha}\dd^{\sigma+\beta}$.
Therefore 
${\xi^{\lambda+\alpha}\dd^{\mu+\beta}\preceq\xi^{\rho+\alpha}\dd^{\sigma+\beta}}$.
Thus $\mathrm{\preceq}\in\NO(N)$.
Hence $\NO(N)$ contains all its accumulation points in $\SO_1(N)$ and therefore $\NO(N)$ is closed in $\SO_1(N)$.
Compactness of $\NO(N)$ follows now from closedness of $\NO(N)$ in the compact space $\SO_1(N)$, see~\ref{closed}.
\end{proof}

\begin{cor}
$\MO(M)$ is compact.
\end{cor}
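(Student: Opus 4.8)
The plan is to read off the compactness of $\MO(M)$ from the two theorems immediately preceding the corollary. By Theorem~\ref{NO(N)}, the topological subspace $\NO(N)$ of $\TO(N)$ is compact. By Theorem~\ref{phi}, the $K$-isomorphism $\Phi$ induces a homeomorphism $\phi:\NO(N)\rightarrow\MO(M)$; in particular $\phi$ is a continuous surjection. Since the image of a compact space under a continuous map is compact, $\MO(M)=\phi(\NO(N))$ is compact.

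In carrying this out I would first invoke Theorem~\ref{NO(N)} for the compactness of $\NO(N)$, then recall from Theorem~\ref{phi} that $\phi$ is in particular continuous and onto, and finally apply the standard fact that continuous images of compact spaces are compact. There is no real obstacle here: all the substantive work has already been done, on the one hand in Theorem~\ref{NO(N)} (the closedness of $\NO(N)$ in the compact space $\SO_1(N)$, which itself rests on Theorem~\ref{compact}) and on the other hand in Theorem~\ref{phi} (the construction of the homeomorphism $\phi$).

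For completeness one could instead give a direct argument mirroring the proof of Theorem~\ref{NO(N)}: choosing a filtration $(M_i)_{i\in\NN_0}$ of $M$ by finite sets, one shows that $\MO(M)$ is closed in $\SO_1(M)$ by verifying that any accumulation point of $\MO(M)$ in $\SO_1(M)$ still satisfies the compatibility axiom (it does, because on each finite $M_{r+1}$ it agrees with a genuine monomial ordering), and then one concludes compactness from Theorem~\ref{closed}. This, however, merely repeats on the polynomial side an argument already available on the Weyl-algebra side, so the homeomorphism route via Theorems~\ref{phi} and~\ref{NO(N)} is the economical one.
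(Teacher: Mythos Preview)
Your argument is correct and matches the paper's own proof, which simply says ``Clear by~\ref{phi} and~\ref{NO(N)}.'' You have merely unpacked this reference by noting that $\phi$ is a continuous surjection and that continuous images of compact spaces are compact; the alternative direct argument you sketch is unnecessary but also correct.
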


\begin{proof}
Clear by~\ref{phi} and~\ref{NO(N)}.
\end{proof}

\begin{cor}
Let ${\preceq}\in\NO(N)$ and put ${\leq}:=\phi(\preceq)$.
Then  ${\preceq}$ and ${\leq}$  are  well-orderings.
\end{cor}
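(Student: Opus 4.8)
The plan is to show that every $\preceq\in\NO(N)$ is a well-ordering of $N$ and then to transport this to $\leq$. Since $\phi$ is induced by the $K$-module isomorphism $\Phi$, which restricts to a bijection $N\to M$ satisfying $u\preceq v\gdw\Phi(u)\leq\Phi(v)$ (see~\ref{phi}), the ordered set $(M,\leq)$ is order-isomorphic to $(N,\preceq)$; hence it suffices to prove that $\preceq$ is a well-ordering, and the assertion for $\leq$ follows by transport of structure. Totality of $\preceq$ is part of the definition of a normal ordering, so what has to be shown is that $N$ contains no infinite strictly $\preceq$-descending sequence; together with totality this yields that every non-empty subset of $N$ has a $\preceq$-least element, by the usual argument of passing repeatedly to a strictly smaller element until the process is forced to stop.

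The one substantial ingredient will be Dickson's lemma: the componentwise partial order on $\NN_0^n\times\NN_0^n$ is a well-quasi-order, so every infinite sequence in $\NN_0^n\times\NN_0^n$ contains a member that is componentwise $\le$ a later member. Suppose, towards a contradiction, that $(\xi^{\lambda^{(k)}}\dd^{\mu^{(k)}})_{k\in\NN}$ is strictly $\preceq$-descending in $N$. Applying Dickson's lemma to $((\lambda^{(k)},\mu^{(k)}))_{k\in\NN}$ yields indices $k<l$ with $\lambda^{(k)}\le\lambda^{(l)}$ and $\mu^{(k)}\le\mu^{(l)}$ componentwise; write $\lambda^{(l)}=\lambda^{(k)}+\alpha$ and $\mu^{(l)}=\mu^{(k)}+\beta$ with $\alpha,\beta\in\NN_0^n$. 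By well-foundedness of $\preceq$ we have $1=\xi^0\dd^0\preceq\xi^\alpha\dd^\beta$, and compatibility then gives
\[
\xi^{\lambda^{(k)}}\dd^{\mu^{(k)}}=\xi^{\lambda^{(k)}+0}\dd^{\mu^{(k)}+0}\preceq\xi^{\lambda^{(k)}+\alpha}\dd^{\mu^{(k)}+\beta}=\xi^{\lambda^{(l)}}\dd^{\mu^{(l)}}.
\]
On the other hand, since $k<l$, strict descent forces $\xi^{\lambda^{(l)}}\dd^{\mu^{(l)}}\prec\xi^{\lambda^{(k)}}\dd^{\mu^{(k)}}$, so the two normal monomials would be distinct while each is $\preceq$ the other, contradicting antisymmetry. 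Hence no such sequence exists, $\preceq$ is a well-ordering of $N$, and consequently $\leq=\phi(\preceq)$ is a well-ordering of $M$.

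I expect that the only real content here is the appeal to Dickson's lemma; everything else is a routine unwinding of the axioms \emph{well-foundedness} and \emph{compatibility} of a normal ordering, applied to the parametrisation of $N$ by $\NN_0^n\times\NN_0^n$ under which addition of exponent vectors — which is exactly what compatibility governs — matches the componentwise partial order. The only point that needs a little care is to extract from Dickson's lemma an \emph{earlier}-below-\emph{later} pair rather than merely some comparable pair, since it is precisely this that clashes with strict $\preceq$-descent.
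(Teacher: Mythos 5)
Your proof is correct, but it runs in the opposite direction from the paper's. The paper transports the problem \emph{to} the commutative side: since ${\leq}=\phi(\preceq)$ is a monomial ordering of $K[X,Y]$ by~\ref{phi}, it invokes the standard fact from commutative algebra (cited as \cite[Theorem~15.1]{Wat}) that every monomial ordering is a well-ordering, and then pulls the conclusion back to $(N,\preceq)$ via $\Phi^{-1}$. You instead prove the well-ordering property \emph{directly} for $(N,\preceq)$ — via Dickson's lemma in its well-quasi-order form, extracting from a putative infinite strictly $\prec$-descending sequence an earlier exponent pair componentwise below a later one, and deriving a contradiction from well-foundedness, compatibility and antisymmetry — and then transport to $(M,\leq)$. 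Your argument is self-contained where the paper's leans on an external citation; in effect you have reproved the cited theorem in the $W$-side parametrisation. All the steps check out: the reduction from ``no infinite strictly descending sequence'' to ``every non-empty subset has a least element'' is the standard (dependent-choice) argument for total orders, and your care in extracting an \emph{earlier-below-later} pair from Dickson's lemma, rather than merely a comparable pair, is exactly the point that makes the contradiction with strict descent go through. Either route is acceptable; the paper's is shorter given the reference, yours is more transparent about where the combinatorial content (Dickson's lemma) actually lives.
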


\begin{proof}
Since~${\leq}$ is a monomial ordering of $K[X,Y]$ by~\ref{phi}, one has that ${\leq}$ is a well-ordering on $M$,
as it is well-known from Commutative Algebra, see e.g.~\cite[Theorem~15.1]{Wat}.
Let $V$ be a non-empty subset of~$N$.
Then $U:=\Phi(V)$ is a non-empty subset of $M$ and it exists therefore some $u_0\in U$ such that $u_0\leq u$ for all $u\in U$.
With $v_0:=\Phi^{-1}(u_0)$ it follows $v_0\preceq v$ for all $v\in V$.
Thus ${\preceq}$ is a well-ordering on~$N$.
\end{proof}

\newcommand{\bucato}{+}

\begin{ntn}
For each subset $B$ of any additive monoid  $(A,+,0)$ we  write $B^{\bucato}_{}$ for $B\setminus\{0\}$.

For each ${\leq}\in\MO(M)$ and each $p\in K[X,Y]^\bucato_{}$ we write $\LT_\leq(p)$ for the greatest monomial
in the canonical form of $p$ with respect to~${\leq}$.

For each $\mathrm{\preceq}\in\NO(N)$ and each $w \in W^{\bucato}_{}$
we write $\lt_\preceq(w)$ for the greatest normal monomial in the canonical form of $w$
with respect to~$\mathrm{\preceq}$
and denote $\Phi(\lt_\preceq(w))$ by $\LT_\preceq(w)$.

Let ${\preceq}\in\NO(N)$ and $w\in W^\bucato\!$.
If $c\in K^\bucato\!$ is the coefficient of $\lt_\preceq(w)$ in the canonical form of $w$,
we write $\ls_\preceq(w)$ for  $c\lt_\preceq(w)$ and $\LS_\preceq(w)$ for $\Phi(\ls_\preceq(w))$.
Whenever $u,v\in W^\bucato\!$ and $\ls_\preceq(u)=a\xi^\lambda\dd^\mu$ and $\ls_\preceq(v)=b\xi^\rho\dd^\sigma$
with $a,b\in K^\bucato\!$ and $\lambda,\mu,\rho,\sigma\in\NN_0^n$
such that $\lambda_i\ge\rho_i$ and $\mu_i\ge\sigma_i$ for all $i\in\{1,\ldots,n\}$,
we write $\frac{\ls_\preceq(u)}{\ls_\preceq(v)}$ for $ab^{-1}\xi^{\lambda-\rho}\dd^{\mu-\sigma}$.

Given any left ideal~$L$ of~$W$, we denote by $\LT_\preceq(L)$
the ideal $\langle \LT_\preceq(x) \mid x\in L^{\bucato}_{}\rangle$ of $K[X,Y]$.
\end{ntn}

\begin{rem}\label{papero}
Let ${\preceq}\in\NO(N)$ and put ${\leq}:=\phi(\preceq)$.
Since $\Phi$ is a $K$-isomorphism, one easily sees that 
$\LT_{\leq}(p)=\Phi(\lt_{\preceq}(\Phi^{-1}(p)))=\LT_{\preceq}(\Phi^{-1}(p))$ for all $p\in K[X,Y]$
and 
$\LT_{\leq}(\Phi(w))=\Phi(\lt_{\preceq}(w))=\LT_{\preceq}(w)$ for all $w\in W$.
\end{rem}

\begin{dfn}
Let $L$ be a left ideal of $W$ and let $\mathrm{\preceq}$ be a normal ordering of $W$.
According to $\text{\cite[p.~6]{Sai}}$, we say that
a finite subset $B$ of $L$
is a \concept{Gr\"obner basis} of $L$ with respect to~${\preceq}$
if $L=\sum_{b\in B}Wb$ and $\LT_\preceq(L)=\langle \LT_\preceq(b) \mid  b\in B^{\bucato}_{} \rangle$.
\end{dfn}


\begin{rem}\label{B U C}
Let $L$ be a left ideal of $W$ and ${\preceq}$ be a normal ordering of $W$.
Let $B$ be a Gr\"obner basis of $L$ with respect to ${\preceq}$.
Then for each finite subset $F$ of $L$ also $B \cup F$  clearly is a Gr\"obner basis of $L$ with respect to~${\preceq}$.
\end{rem}

\begin{thm}\label{GB exist}
Let $L$ be a left ideal of $W$ and $\mathrm{\preceq}$ be a normal ordering of $W$.
Then $L$ admits a Gr\"obner basis with respect to~$\mathrm{\preceq}$.
\end{thm}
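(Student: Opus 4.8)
I would run the standard existence argument for Gr\"obner bases, confining the noncommutativity of $W$ to a single leading-term computation. First I would dispose of the trivial case $L=0$ by taking $B=\emptyset$: then $L=\sum_{b\in\emptyset}Wb$, and since $L^\bucato=\emptyset$ both $\LT_\preceq(L)$ and $\langle\LT_\preceq(b)\mid b\in B^\bucato\rangle$ are the zero ideal of $K[X,Y]$. So assume $L\ne0$. By definition $\LT_\preceq(L)=\langle\LT_\preceq(x)\mid x\in L^\bucato\rangle$ is an ideal of $K[X,Y]$ generated by \emph{monomials}, because each $\LT_\preceq(x)=\Phi(\lt_\preceq(x))$ lies in the monomial basis $M$ of~\ref{Supp2}. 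Since $K[X,Y]$ is noetherian and a finitely generated monomial ideal is already generated by finitely many of its monomial generators, there are $x_1,\dots,x_m\in L^\bucato$ with $\LT_\preceq(L)=\langle\LT_\preceq(x_1),\dots,\LT_\preceq(x_m)\rangle$. I claim that $B:=\{x_1,\dots,x_m\}$ is a Gr\"obner basis of $L$ with respect to $\preceq$. The equality $\LT_\preceq(L)=\langle\LT_\preceq(b)\mid b\in B^\bucato\rangle$ then holds by construction, so everything reduces to proving $L=\sum_{i=1}^{m}Wx_i=:L'$.

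The technical core is the following claim, which I would establish by induction on $|\alpha|+|\beta|$: if $w\in W^\bucato$ and $\lt_\preceq(w)=\xi^\rho\dd^\sigma$, then for all $\alpha,\beta\in\NN_0^n$ the product $\xi^\alpha\dd^\beta w$ has, in its canonical form, greatest normal monomial $\xi^{\rho+\alpha}\dd^{\sigma+\beta}$ and the same leading coefficient as $w$. The point is that bringing $\dd^\beta\xi^\rho$ into canonical form via $\dd_k\xi_k=\xi_k\dd_k+1$ yields $\xi^\rho\dd^\beta$ plus a $K$-combination of normal monomials $\xi^{\rho'}\dd^{\beta'}$ with $\rho'\le\rho$, $\beta'\le\beta$ componentwise and $(\rho',\beta')\ne(\rho,\beta)$; after multiplying on the left by $\xi^\alpha$, on the right by $\dd^\sigma$, and performing the same computation on the lower terms of $w$, every resulting normal monomial other than $\xi^{\rho+\alpha}\dd^{\sigma+\beta}$ is strictly $\prec\xi^{\rho+\alpha}\dd^{\sigma+\beta}$ by well-foundedness ($1\prec\xi^\gamma\dd^\delta$ for $(\gamma,\delta)\ne(0,0)$) together with compatibility of $\preceq$. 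The one-variable step is simply $\dd_k\cdot\xi^\rho\dd^\sigma=\xi^\rho\dd^{\sigma+e_k}+\rho_k\,\xi^{\rho-e_k}\dd^\sigma$, whose second summand is $\prec\xi^\rho\dd^{\sigma+e_k}$ because $1\prec\xi_k\dd_k$.

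Then I would argue by contradiction. Suppose $L'\subsetneq L$. As $\preceq$ is a well-ordering on $N$ (the corollary above), the non-empty set $\{\lt_\preceq(w)\mid w\in L\setminus L'\}$ has a least element; choose $w_0\in L\setminus L'$ attaining it and write $\lt_\preceq(w_0)=\xi^\tau\dd^\upsilon$. Then $\LT_\preceq(w_0)=\Phi(\xi^\tau\dd^\upsilon)$ is a monomial lying in the monomial ideal $\LT_\preceq(L)$, hence divisible by some $\LT_\preceq(x_i)=\Phi(\xi^\rho\dd^\sigma)$, i.e.~$\tau\ge\rho$ and $\upsilon\ge\sigma$ componentwise. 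Setting $\alpha:=\tau-\rho$, $\beta:=\upsilon-\sigma$ and $z:=\xi^\alpha\dd^\beta x_i\in Wx_i\subseteq L'$, the claim gives $\lt_\preceq(z)=\xi^{\rho+\alpha}\dd^{\sigma+\beta}=\xi^\tau\dd^\upsilon=\lt_\preceq(w_0)$ with leading coefficient equal to that of $x_i$; so for a suitable $c\in K^\bucato$ the element $w_0-cz\in L$ has vanishing $\xi^\tau\dd^\upsilon$-coefficient. Since $z\in L'$ and $w_0\notin L'$, also $w_0-cz\notin L'$; in particular $w_0-cz\ne0$, whence $\lt_\preceq(w_0-cz)\prec\lt_\preceq(w_0)$ with $w_0-cz\in L\setminus L'$, contradicting minimality. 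Therefore $L=L'$, and $B$ is the required Gr\"obner basis.

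The step I expect to be the main obstacle is the leading-term claim of the second paragraph: it is the only place the commutation relations of $W$ enter, and it is what replaces the elementary monomial-divisibility reduction of the commutative, Buchberger-style existence proof. Care is needed to keep track of all the lower-order terms produced when the $\dd$'s are moved past the $\xi$'s and to verify that compatibility really does push every one of them strictly below $\xi^{\rho+\alpha}\dd^{\sigma+\beta}$.
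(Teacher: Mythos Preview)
Your argument is correct, but it follows a genuinely different route from the paper's. The paper proceeds by contradiction from the other end: using that $W$ is left noetherian it first picks a finite generating set $F_0$ of $L$, and then, assuming no Gr\"obner basis exists, repeatedly adjoins elements $x_k\in L^\bucato$ with $\LT_\preceq(x_k)\notin\langle\LT_\preceq(f)\mid f\in F_{k-1}^\bucato\rangle$ to build a strictly ascending chain of monomial ideals in $K[X,Y]$, contradicting noetherianity there. In particular the paper's proof never needs to compute $\lt_\preceq(\xi^\alpha\dd^\beta w)$; that computation is postponed to the later Lemma~\ref{LTLT}(b), of which your ``technical core'' is the special case $u=\xi^\alpha\dd^\beta$. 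Your approach instead uses only the noetherianity of $K[X,Y]$ (not of $W$) and the well-ordering property of $\preceq$, at the price of doing the leading-term-of-product calculation up front; it is closer in spirit to the standard commutative existence proof and yields the slightly stronger fact that any set $\{x_1,\dots,x_m\}\subseteq L^\bucato$ whose leading terms generate $\LT_\preceq(L)$ already generates $L$ as a left ideal. Either organization is fine; the paper's ordering simply keeps Theorem~\ref{GB exist} independent of the multiplicative lemma.
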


\begin{proof}
Suppose that $L$ admits no Gr\"obner basis with respect to $\mathrm{\preceq}$.
Since $W$ is left noetherian, 
there exists a finite subset $F_0$ of $L$ such that $L=\sum_{f\in F_0}Wf$.
Consider the ideal $I_0:=\langle\LT_\preceq(f)\mid f\in F^{\bucato}_{0}\rangle$ of $K[X,Y]$.
It holds ${I_0\subsetneq\LT_\preceq(L)}$, because $F_0$ is not a Gr\"obner basis. 
Thus there exists $x_1\in L\wo \{0\}$ with ${\LT_\preceq(x_1)\notin I_0}$.
Put ${F_1:=F_0\cup \{x_1\}}$
and consider the ideal $I_1:=\langle\LT_\preceq(f)\mid f\in F^{\bucato}_{1}\rangle$ of $K[X,Y]$.
Again, it holds ${I_1\subsetneq\LT_\preceq(L)}$, because $F_1$ is not a Gr\"obner basis. 
Thus there exists $x_2\in L\wo \{0\}$ with ${\LT_\preceq(x_2)\notin I_1}$.
Put ${F_2:=F_1\cup \{x_2\}}$
and consider the ideal $I_2:=\langle\LT_\preceq(f)\mid f\in F^{\bucato}_{2}\rangle$  of $K[X,Y]$.
Going on in this manner we construct an infinite chain $I_0\subsetneq I_1\subsetneq I_2\subsetneq\ldots$
of ideals of $K[X,Y]$, in contradiction to the noetherianity of $K[X,Y]$.
\end{proof}

\begin{lem}\label{LTLT}
Let ${\preceq}\in\NO(N)$, ${\leq}:=\phi(\preceq)$.
For all $u,v\in W^\bucato$: 
\textnormal{(a)}~$\LT_\preceq(u+v)\leq\max_{\leq}\{\LT_\preceq(u),\LT_\preceq(v)\}$ whenever $u+v\ne 0$ with equality holding
if $\LT_\preceq(u)\ne\LT_\preceq(v)$,
\textnormal{(b)}~$\LT_\preceq(uv)=\LT_\preceq(u)\LT_\preceq(v)$,
and \textnormal{(c)}~$\LT_\preceq([u,v])<\LT_\preceq(u)\LT_\preceq(v)$ whenever $[u,v]\ne 0$.
\end{lem}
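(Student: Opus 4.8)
The plan is to reduce everything to the commutative polynomial ring $K[X,Y]$ via the homeomorphism $\phi$ and the $K$-isomorphism $\Phi$, where leading-term statements are standard, and then handle the genuinely noncommutative point, namely part (c), by a direct computation in $W$.

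\textbf{Parts (a) and (b).} For (a), I would first observe that $\Phi$ is a $K$-linear isomorphism, so $\Phi(u+v)=\Phi(u)+\Phi(v)$, hence $\Supp(u+v)\subseteq\Supp(u)\cup\Supp(v)$ after applying $\Phi$; thus $\LT_\preceq(u+v)=\LT_\leq(\Phi(u)+\Phi(v))$ is the $\leq$-greatest monomial occurring in $\Phi(u)+\Phi(v)$, which lies in $\Supp(\Phi(u))\cup\Supp(\Phi(v))$ and is therefore $\leq\max_\leq\{\LT_\preceq(u),\LT_\preceq(v)\}$ by Remark~\ref{papero}. If moreover $\LT_\preceq(u)\ne\LT_\preceq(v)$, the $\leq$-larger of the two leading monomials, say $\LT_\preceq(u)$, cannot be cancelled because $v$ has no term equal to it (all terms of $\Phi(v)$ are $\leq$-strictly below $\LT_\preceq(u)$), so it survives in $\Phi(u)+\Phi(v)$, giving equality. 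For (b), the content is precisely Lemma~\ref{LTLT}(b) as a statement about $W$; I would prove it by the noncommutative product rule. Writing $u$ and $v$ in canonical form and using the commutation relations $\dd_i\xi_j=\xi_j\dd_i+\delta_{ij}$, the product $\xi^\lambda\dd^\mu\cdot\xi^\rho\dd^\sigma$ expands as $\xi^{\lambda+\rho}\dd^{\mu+\sigma}$ plus a sum of normal monomials $\xi^{\lambda+\rho-\gamma}\dd^{\mu+\sigma-\gamma}$ with $0\ne\gamma\in\NN_0^n$, each of which is $\prec\xi^{\lambda+\rho}\dd^{\mu+\sigma}$ by the compatibility and well-foundedness axioms (dividing by $\xi^{\lambda+\rho-\gamma}\dd^{\mu+\sigma-\gamma}$ and using $1\prec\xi^\gamma\dd^\gamma$). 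Applying compatibility once more, among all cross terms of $uv$ the unique $\preceq$-maximal normal monomial is $\lt_\preceq(u)\,\lt_\preceq(v)$, and its coefficient is the product of the two nonzero leading coefficients, hence nonzero; applying $\Phi$ and Remark~\ref{papero} yields (b).

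\textbf{Part (c).} This is the main obstacle, since the commutator $[u,v]=uv-vu$ is exactly where the two top-degree products $\lt_\preceq(u)\lt_\preceq(v)$ cancel, so one must look at the next-order terms. The key observation is that in the canonical-form expansion of $uv$ the top normal monomial $\xi^{\lambda+\rho}\dd^{\mu+\sigma}$ (with $\lt_\preceq(u)=\xi^\lambda\dd^\mu$, $\lt_\preceq(v)=\xi^\rho\dd^\sigma$) has coefficient equal to the product of leading coefficients \emph{and does not depend on the order of multiplication}: reordering $\xi^\lambda\dd^\mu\cdot\xi^\rho\dd^\sigma$ into normal form only produces strictly smaller monomials as correction terms, and likewise for $\xi^\rho\dd^\sigma\cdot\xi^\lambda\dd^\mu$, and the monomial $\xi^{\lambda+\rho}\dd^{\mu+\sigma}$ arises from exactly one cross term in each case. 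Hence these top terms cancel in $uv-vu$, so every normal monomial appearing in the canonical form of $[u,v]$ is $\prec\xi^{\lambda+\rho}\dd^{\mu+\sigma}$; by (a)-type reasoning $\lt_\preceq([u,v])\prec\lt_\preceq(u)\lt_\preceq(v)$, and applying $\Phi$ gives $\LT_\preceq([u,v])<\LT_\preceq(u)\LT_\preceq(v)$ in $K[X,Y]$, as required. One small point to dispatch cleanly: I would argue it suffices to prove the claim for $u=\xi^\lambda\dd^\mu$ and $v=\xi^\rho\dd^\sigma$ monomials with the right leading behaviour and then assemble the general case, or more directly, argue at the level of supports that no normal monomial in $uv-vu$ exceeds or equals $\lt_\preceq(u)\lt_\preceq(v)$ and that the latter's coefficient is zero in $[u,v]$.

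Throughout, the only nontrivial input beyond bookkeeping with multi-indices is the explicit normal-form expansion of a product of two normal monomials together with the compatibility and well-foundedness axioms of a normal ordering; parts (a) and (b) are then formal, and part (c) is the observation that the leading coefficient of the top normal monomial of a product is order-independent, so it disappears under commutation.
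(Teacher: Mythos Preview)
Your argument is correct, and it follows a genuinely different route from the paper's. The paper proves (b) and (c) \emph{simultaneously} by transfinite induction on $\LT_\preceq(u)\LT_\preceq(v)$ in the well-ordered set $(M,\leq)$: for normal monomials it uses the identity $[\xi^\lambda\dd^\mu,\xi^\rho\dd^\sigma]=\xi^\lambda\dd^{\mu-\epsilon_i}[\dd_i,\xi^\rho\dd^\sigma]+[\xi^\lambda\dd^{\mu-\epsilon_i},\xi^\rho\dd^\sigma]\dd_i$ together with $[\dd_i,\xi^\rho]=\rho_i\xi^{\rho-\epsilon_i}$ to peel off one derivative at a time, invoking the induction hypothesis for each smaller product that appears; it then assembles the general case from the monomial case much as you do. Your approach instead takes the closed-form normal expansion $\xi^\lambda\dd^\mu\cdot\xi^\rho\dd^\sigma=\xi^{\lambda+\rho}\dd^{\mu+\sigma}+\sum_{0\ne\gamma}c_\gamma\,\xi^{\lambda+\rho-\gamma}\dd^{\mu+\sigma-\gamma}$ as a known input, so that (b) becomes a one-line application of compatibility and well-foundedness, and (c) follows by the clean observation that the coefficient of the top monomial in a product of two elements of $W$ is symmetric in the factors and hence vanishes in the commutator. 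What you gain is a shorter and conceptually transparent argument that avoids transfinite induction altogether; what the paper's approach buys is self-containment, since it never appeals to the explicit product formula (which itself hides an induction). Either way, part (a) is handled identically in both.
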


\begin{proof}
Statement~(a) is clear and follows from the inclusion $\Supp(u+v)\subseteq\Supp(u)\cup\Supp(v)$.
It also follows from the analogous result in $K[X,Y]$ because $\Phi$ is $K$-linear.

Since $M=\{\LT_\preceq(u)\LT_\preceq(v)\mid u,v\in W^\bucato\}$,
we may prove statements (b) and (c) by transfinite induction over $\LT_\preceq(u)\LT_\preceq(v)$
in the well-ordered set $(M,\leq)$.

Let $u,v\in W^\bucato\!$.
If $\LT_\preceq(u)\LT_\preceq(v)=1$, then $\LT_\preceq(u)=1=\LT_\preceq(v)$, hence $u\in K^\bucato$ and $v\in K^\bucato$,
so that (b) is clear  and (c) is trivially true as $[u,v]=0$.

Let $\LT_\preceq(u)\LT_\preceq(v)>1$ and assume that statements (b) and (c) hold for all $u',v'\in W^\bucato\!$ such that
$\LT_\preceq(u')\LT_\preceq(v')<\LT_\preceq(u)\LT_\preceq(v)$.

Choose any $(\lambda,\mu)\in \supp(u)$ and any $(\rho,\sigma)\in \supp(v)$.
If there exists $i\in\{1,\ldots,n\}$ such that ${\mu_i>0}$, 
we can write
$[\xi^\lambda\dd^\mu,\xi^\rho\dd^\sigma]
=\xi^\lambda\dd^{\mu-\epsilon_i}[\dd_i,\xi^\rho\dd^\sigma]+[\xi^\lambda\dd^{\mu-\epsilon_i},\xi^\rho\dd^\sigma]\dd_i$
with $\epsilon_i:=(\delta_{ih})_{1\le h\le n}$ where ${\delta_{ih}\in\NN_0}$ is the Kronecker delta.
Since $\dd_i$ and $\dd^\sigma$ commute, it holds $[\dd_i,\xi^\rho\dd^\sigma]=[\dd_i,\xi^\rho]\dd^\sigma$.
It follows that $[\dd_i,\xi^\rho\dd^\sigma]=0$ if $\rho_i=0$,
whereas $[\dd_i,\xi^\rho\dd^\sigma]=\rho_i\xi^{\rho-\epsilon_i}\dd^\sigma$ if $\rho_i>0$.
If $\rho_i>0$, then 
$\LT_\preceq(\xi^\lambda\dd^{\mu-\epsilon_i}[\dd_i,\xi^\rho\dd^\sigma])
=X^{\lambda+\rho-\epsilon_i}Y^{\mu+\sigma-\epsilon_i}$ by the induction hypothesis.
By the induction hypothesis, 
$\LT_\preceq([\xi^\lambda\dd^{\mu-\epsilon_i},\xi^\rho\dd^\sigma])<X^{\lambda+\rho}Y^{\mu+\sigma-\epsilon_i}$.
Thus $\LT_\preceq([\xi^\lambda\dd^{\mu-\epsilon_i},\xi^\rho\dd^\sigma])\LT_\preceq(\dd_i)<X^{\lambda+\rho}Y^{\mu+\sigma}$
and hence we may appeal again to the induction hypothesis to get 
$\LT_\preceq([\xi^\lambda\dd^{\mu-\epsilon_i},\xi^\rho\dd^\sigma]\dd_i)
=\LT_\preceq([\xi^\lambda\dd^{\mu-\epsilon_i},\xi^\rho\dd^\sigma])\LT_\preceq(\dd_i)
<X^{\lambda+\rho}Y^{\mu+\sigma}$.
We conclude by (a) that $\LT_\preceq([\xi^\lambda\dd^\mu,\xi^\rho\dd^\sigma])<X^{\lambda+\rho}Y^{\mu+\sigma}$.
Further, 
$\xi^\lambda\dd^\mu\xi^\rho\dd^\sigma=\xi^{\lambda+\rho}\dd^{\mu+\sigma}+\xi^\lambda[\dd^\mu,\xi^\rho]\dd^\sigma$.
Since $X^\rho Y^\mu \leq X^{\lambda+\rho}Y^{\mu+\sigma}$, one shows as above that
$\LT_\preceq([\dd^\mu,\xi^\rho])<X^\rho Y^\mu$.
Hence, using induction  and  compatibility twice,  
we get 
$\LT_\preceq(\xi^\lambda[\dd^\mu,\xi^\rho]\dd^\sigma)=\LT_\preceq(\xi^\lambda)\LT_\preceq([\dd^\mu,\xi^\rho])
\LT_\preceq(\dd^\sigma)<X^{\lambda+\rho}Y^{\mu+\sigma}$.
As clearly $\LT_\preceq(\xi^{\lambda+\rho}\dd^{\mu+\sigma})=X^{\lambda+\rho}Y^{\mu+\sigma}$,
it follows $\LT_\preceq(\xi^\lambda\dd^\mu\xi^\rho\dd^\sigma)=X^{\lambda+\rho}Y^{\mu+\sigma}$.

If $\mu=0$ and there exists $j\in\{1,\ldots,n\}$ such that $\sigma_j>0$, we reduce immediately to the previous case
since $[\xi^\lambda\dd^\mu,\xi^\rho\dd^\sigma]=-[\xi^\rho\dd^\sigma, \xi^\lambda\dd^\mu]$,
whereas if $\mu=0$ and $\sigma=0$, then  $[\xi^\lambda\dd^\mu,\xi^\rho\dd^\sigma]=0$
and clearly $\LT_\preceq(\xi^\lambda\dd^\mu\xi^\rho\dd^\sigma)=X^{\lambda+\rho}Y^{\mu+\sigma}$.

We write $u$ and $v$ in canonical form as
$u=\sum_{(\lambda,\mu)\in \supp(u)} a_{(\lambda,\mu)}\xi^\lambda\dd^\mu$ and
$v=\sum_{(\rho,\sigma)\in \supp(v)} b_{(\rho,\sigma)}\xi^\rho\dd^\sigma$ where
$a_{(\lambda,\mu)}\in K^\bucato$ for all $(\lambda,\mu)\in \supp(u)$ and 
$b_{(\rho,\sigma)}\in K^\bucato$ for all $(\rho,\sigma)\in \supp(v)$.
We find a unique $(\overline{\lambda},\overline{\mu})\in\supp(u)$ such that
$\lt_\preceq(u)=\xi^{\overline{\lambda}}\dd^{\overline{\mu}}$
and a unique $(\overline{\rho},\overline{\sigma})\in\supp(v)$ such that
$\lt_\preceq(v)=\xi^{\overline{\rho}}\dd^{\overline{\sigma}}$.
Thus $\LT_\preceq(u)\LT_\preceq(v)=X^{\overline{\lambda}+\overline{\rho}}Y^{\overline{\mu}+\overline{\sigma}}$.

If $(\lambda,\mu)\in\supp(u)\wo\{(\overline{\lambda},\overline{\mu})\}$,
say $\lambda\ne\overline{\lambda}$,
then $X^\lambda<X^{\overline{\lambda}}$.
Indeed, if $X^\lambda\ge X^{\overline{\lambda}}$,
then $X^\lambda Y^{\overline{\mu}}\ge X^{\overline{\lambda}}Y^{\overline{\mu}}$
by compatibilty,
thus $X^\lambda Y^{\overline{\mu}} =  X^{\overline{\lambda}}Y^{\overline{\mu}}$
as $X^{\overline{\lambda}} Y^{\overline{\mu}}=\LT_\preceq(u)$,
hence $\lambda=\overline{\lambda}$, a contradiction.
Similarly, $Y^\mu<Y^{\overline{\mu}}$ if $\mu\ne\overline{\mu}$.
Clearly, an analogous result holds for all $(\rho,\sigma)\in\supp(v)\wo\{(\overline{\rho},\overline{\sigma})\}$.
By compatibility it follows immediately that
$X^{\lambda+\rho}Y^{\mu+\sigma}<X^{\overline{\lambda}+\overline{\rho}}Y^{\overline{\mu}+\overline{\sigma}}$
for all $((\lambda,\mu),(\rho,\sigma))\in\supp(u)\times\supp(v)\wo
\{((\overline{\lambda},\overline{\mu}),(\overline{\rho},\overline{\sigma}))\}$.

It holds
$[u,v]
=
\sum_{((\lambda,\mu),(\rho,\sigma))\in\supp(u)\times\supp(v)}
a_{(\lambda,\mu)}b_{(\rho,\sigma)}[\xi^\lambda\dd^\mu,\xi^\rho\dd^\sigma]$.
By (a) and  the shown  inequalities
$\LT_\preceq([\xi^\lambda\dd^\mu,\xi^\rho\dd^\sigma])<X^{\lambda+\rho}Y^{\mu+\sigma}$
and 
$X^{\lambda+\rho}Y^{\mu+\sigma} \! \leq X^{\overline{\lambda}+\overline{\rho}}Y^{\overline{\mu}+\overline{\sigma}}$
for all $((\lambda,\mu),(\rho,\sigma))\in\supp(u)\times\supp(v)$,
we get
$\LT_\preceq([u,v])<X^{\overline{\lambda}+\overline{\rho}}Y^{\overline{\mu}+\overline{\sigma}}$.

It holds
$uv
=
a_{(\overline{\lambda},\overline{\mu})}b_{(\overline{\rho},\overline{\sigma})}
\xi^{\overline{\lambda}}\dd^{\overline{\mu}}\xi^{\overline{\rho}}\dd^{\overline{\sigma}}
+
\sum
_{((\lambda,\mu),(\rho,\sigma))\in \supp(u)\times \supp(v)
\wo\{((\overline{\lambda},\overline{\mu}),(\overline{\rho},\overline{\sigma}))\}}
a_{(\lambda,\mu)}b_{(\rho,\sigma)}\xi^\lambda\dd^\mu\xi^\rho\dd^\sigma
$.
By (a) and by the shown equalities
$\LT_\preceq(\xi^\lambda\dd^\mu\xi^\rho\dd^\sigma)=X^{\lambda+\rho}Y^{\mu+\sigma}$
for all $((\lambda,\mu),(\rho,\sigma))\in\supp(u)\times\supp(v)$
and because
$X^{\lambda+\rho}Y^{\mu+\sigma} < X^{\overline{\lambda}+\overline{\rho}}Y^{\overline{\mu}+\overline{\sigma}}$
for all $((\lambda,\mu),(\rho,\sigma))\in\supp(u)\times\supp(v)
\wo\{((\overline{\lambda},\overline{\mu}),(\overline{\rho},\overline{\sigma}))\}$,
we conclude that
$\LT_\preceq(uv)=X^{\overline{\lambda}+\overline{\rho}}Y^{\overline{\mu}+\overline{\sigma}}$.
\end{proof}

\begin{pro}\label{divisione}
Let $w\in W$. Let $F\subseteq W$ be finite, ${\preceq}\in\NO(N)$, and ${\leq}:=\phi(\preceq)$.
Then there exist ${r\in W}$  and $\smash{(q_f)_{f\in F}\in W^{\oplus F}}$ such that: 
{\textnormal{(a)}}~${w=\sum_{f\in F} q_f f + r}$,
{\textnormal{(b)}}~${\forall f\in F : f\ne 0 \Rightarrow \forall s\in\Supp(r) : \LT_{\preceq}(f)\nmid\Phi(s)}$,
and 
{\textnormal{(c)}}~${w\ne 0\Rightarrow \forall f\in F : q_f f\ne 0 \Rightarrow \LT_{\preceq}(q_f f)\leq\LT_{\preceq}(w)}$.
\end{pro}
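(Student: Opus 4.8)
The plan is to carry out the usual multivariate division algorithm, but driven by the monomial order $\leq$ on $M=\Phi(N)$ rather than by an order on $N$ directly. The key observation is that Lemma~\ref{LTLT} lets us transport leading-term bookkeeping between $W$ and $K[X,Y]$: part~(b) gives $\LT_\preceq(q_ff)=\LT_\preceq(q_f)\LT_\preceq(f)$, so divisibility of $\Phi(s)$ by $\LT_\preceq(f)$ in $K[X,Y]$ is exactly the condition that lets us cancel a term, and part~(a) controls what happens to the remaining terms after a cancellation. Since $\leq$ is a well-ordering on $M$ (Corollary after~\ref{phi}), I can run the algorithm and argue termination by well-foundedness.

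First I would set up the reduction step. Suppose we have a current "intermediate dividend" $w'\in W^\bucato$ and some $f\in F^\bucato$ with $\LT_\preceq(f)\mid \Phi(s)$ for some $s\in\Supp(w')$; I may take $s$ so that $\Phi(s)$ is $\leq$-maximal among such divisible support monomials, in particular $\Phi(s)\leq\LT_\preceq(w')$. Writing $\ls_\preceq(w')=c\cdot s$ (after identifying $s=\xi^\lambda\dd^\mu$) and $\ls_\preceq(f)=a\xi^\rho\dd^\sigma$, the divisibility $\LT_\preceq(f)=X^\rho Y^\sigma\mid \Phi(s)=X^\lambda Y^\mu$ means $\lambda\geq\rho$ and $\mu\geq\sigma$ componentwise, so the quotient monomial $q:=\frac{\ls_\preceq(w')}{\ls_\preceq(f)}=ca^{-1}\xi^{\lambda-\rho}\dd^{\mu-\sigma}\in W$ is defined. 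By Lemma~\ref{LTLT}(b), $\LT_\preceq(qf)=X^{\lambda-\rho}Y^{\mu-\sigma}\cdot X^\rho Y^\sigma=\Phi(s)$ and the leading coefficient of $qf$ is $ca^{-1}\cdot a=c$, i.e. $\ls_\preceq(qf)=c\,s$, the same as the chosen term of $w'$. Hence either $w'-qf=0$, or $w'-qf\in W^\bucato$ with $\LT_\preceq(w'-qf)\leq\LT_\preceq(w')$ by Lemma~\ref{LTLT}(a); moreover when $\Phi(s)=\LT_\preceq(w')$ the leading terms genuinely cancel and $\LT_\preceq(w'-qf)<\LT_\preceq(w')$, while when $\Phi(s)<\LT_\preceq(w')$ we have $\LT_\preceq(w'-qf)=\LT_\preceq(w')$ but we have strictly decreased the $\leq$-maximal support monomial that is divisible by some $\LT_\preceq(f)$. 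In either case the pair $\bigl(\LT_\preceq(w'),\ \max_\leq\{\Phi(s)\in\Supp(w')\mid \exists f\in F^\bucato:\LT_\preceq(f)\mid\Phi(s)\}\bigr)$, ordered lexicographically in $(M,\leq)\times(M\cup\{-\infty\},\leq)$, strictly decreases; if no support monomial of $w'$ is divisible by any $\LT_\preceq(f)$, we instead move the $\leq$-maximal term of $w'$ into the remainder $r$ and continue with a $\preceq$-smaller intermediate dividend.

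Then I would assemble the algorithm: start with $w'=w$, $q_f=0$ for all $f$, $r=0$; at each stage, if some support monomial of the current $w'$ is divisible by some $\LT_\preceq(f)$ pick the maximal such (with an associated $f$), perform the reduction step, and add $q$ to $q_f$; otherwise split off the $\leq$-maximal term of $w'$ into $r$. The algorithm maintains the invariant $w=\sum_f q_f f + w' + r$, and it terminates because the relevant quantity strictly decreases in a well-ordered set. At termination $w'=0$, giving~(a). Property~(b) holds because every monomial ever placed in $r$ was, at the moment of placement, not divisible by any $\LT_\preceq(f)$ with $f\neq 0$, and monomials in $r$ are never altered afterwards. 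Property~(c): each increment $q$ added to some $q_f$ satisfies $\LT_\preceq(qf)=\Phi(s)\leq\LT_\preceq(w')\leq\LT_\preceq(w)$ since $\LT_\preceq$ of the intermediate dividend never increases; so by Lemma~\ref{LTLT}(b) and~(a) each summand $q_f f$, being a sum of such pieces (with, a priori, possible further cancellation only lowering the leading term), satisfies $\LT_\preceq(q_f f)\leq \LT_\preceq(w)$ whenever $q_f f\neq 0$.

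The main obstacle is termination, and it is genuinely subtler than in the commutative polynomial case: in $W$ a single reduction $w'\mapsto w'-qf$ can create brand-new support monomials that are larger (in $\leq$) than any monomial of $w'$ other than $\LT_\preceq(w')$ itself, because $qf$ is not just $\ls_\preceq(q)\ls_\preceq(f)$ but carries lower-order commutator corrections. Lemma~\ref{LTLT}(a),(b) are exactly what is needed to control this: the \emph{only} term of $qf$ at level $\LT_\preceq(w')$ is the one we designed to cancel, and every other term of $qf$ is strictly $\leq$-below $\LT_\preceq(w')=\Phi\bigl(\lt_\preceq(q)\lt_\preceq(f)\bigr)$, hence below the point we are currently working at. This is precisely why the two-component lexicographic measure above does strictly decrease, and checking that carefully — in particular that the newly introduced terms lie below the current working monomial and so cannot resurrect an already-cleared level — is the crux of the argument.
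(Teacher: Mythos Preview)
Your argument is correct in substance, but it is more elaborate than the paper's and contains a notational slip that you should fix.

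\medskip
\noindent\textbf{Comparison with the paper.} The paper proceeds by transfinite induction on $\LT_\preceq(w)$ in the well-ordered set $(M,\leq)$ and at each step looks \emph{only} at the leading monomial $\lt_\preceq(w)$: if some $\LT_\preceq(f)$ divides $\LT_\preceq(w)$ one sets $w':=w-\tfrac{\ls_\preceq(w)}{\ls_\preceq(f)}f$, otherwise $w':=w-\ls_\preceq(w)$. In either case $w'=0$ or $\LT_\preceq(w')<\LT_\preceq(w)$ by Lemma~\ref{LTLT}, so a single well-ordered measure suffices and the induction closes immediately. Your algorithm instead allows reducing at the \emph{maximal divisible} support monomial, which may lie strictly below $\lt_\preceq(w')$; this is why you need the two-component lexicographic measure and the discussion of ``resurrecting already-cleared levels.'' That extra generality buys nothing for the proposition at hand---if you simply restricted attention to the leading monomial, your ``Case~2'' would disappear, the measure would collapse to $\LT_\preceq(w')$ alone, and your argument would become exactly the paper's recursion unwound into an iteration.

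\medskip
\noindent\textbf{A notational error.} In your reduction step you write $\ls_\preceq(w')=c\cdot s$ and $q=\tfrac{\ls_\preceq(w')}{\ls_\preceq(f)}$. When $\Phi(s)<\LT_\preceq(w')$ this is false: $\ls_\preceq(w')$ is the leading summand of $w'$, not the term at $s$. What you mean is: let $c\in K^\bucato$ be the coefficient of $s$ in the canonical form of $w'$, and set $q:=ca^{-1}\xi^{\lambda-\rho}\dd^{\mu-\sigma}$. The subsequent computations (that $\ls_\preceq(qf)=cs$ and that all other support monomials of $qf$ lie strictly below $\Phi(s)$) are then justified by Lemma~\ref{LTLT}(b) exactly as you indicate.
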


\begin{proof}
If $w=0$, we put $r:=0$ and $(q_f)_{f\in F}:=(0)_{f\in F}$.
Let $w\ne 0$.
Since $M=\{\LT_\preceq(v)\mid v\in W^\bucato\}$,
we may proceed by transfinite induction in $(M,\leq)$ assuming that the statement holds for all $v\in W^\bucato\!$
such that ${\LT_{\preceq}(v)<\LT_{\preceq}(w)}$.
We distinguish between two cases:
(i)~if there exists ${f^{\bucato}}\!\in F^\bucato\!$ such that ${\LT_\preceq(f^\bucato)\mid\LT_\preceq(w)}$, then we put
$\smash{w':=w-\frac{\ls_\preceq(w)}{\ls_\preceq(f^\bucato)}f^\bucato}$;
(ii)~otherwise we set $w':=w-\ls_\preceq(w)$.

In the  case~(i) it holds 
$\LS_\preceq(\smash{\frac{\ls_\preceq(w)}{\ls_\preceq(f^\bucato)}f^\bucato})
=\LS_\preceq(\smash{\frac{\ls_\preceq(w)}{\ls_\preceq(f^\bucato)}})\LS_\preceq(f^\bucato)
=\smash{\frac{\LS_\preceq(w)}{\LS_\preceq(f^\bucato)}\LS_\preceq(f^\bucato)}
=\LS_\preceq(w)$ by~\ref{LTLT}(b).
Therefore, provided that $w'\ne 0$, by~\ref{LTLT}(a) we see that $\LT_\preceq(w')<\LT_\preceq(w)$.
This last relation
clearly holds also in the  case~(ii) when $w'\ne 0$.
Either by the induction hypothesis or by the preliminarly treated case when $w'=0$,
we find $r'\in W$ and $(q'_f)_{f\in F}\in W^{\oplus F}$ such that
 pro\-per\-ties (a), (b), (c) hold for $w'$ with respect to $r'$ and $(q'_f)_{f\in F}$.
In the  case~(i) we put $r:=r'$ and assign
$\smash{q_{f^\bucato}:=q'_{f^\bucato}\!+\frac{\ls_\preceq(w)}{\ls_\preceq(f^\bucato)}}$
and $q_f:=q'_f$ for all $f\in F\wo\{f^\bucato\}$.
In the  case~(ii) we set $r:=r'+\ls_\preceq(w)$ and $q_f:=q'_f$ for all $f\in F$.

We now verify that in either case  properties~(a), (b), (c) are fulfilled by~$r$ and~$(q_f)_{f\in F}$.
Property~(a) is clearly satisfied.
As for property~(b), in the  case~(i) we have $\Supp(r)=\Supp(r')$, so that the statement holds
either by the induction hypothesis or trivially when $w'=0$.
In the  case~(ii) we have ${\Supp(r)\subseteq\Supp(r')\cup\{\lt_\preceq(w)\}}$,
thus (b) holds
by the induction hypothesis, when $w'\ne 0$, 
and by our assumption that ${\LT_\preceq(f)\nmid\LT_\preceq(w)}$ for all $f\in {F^\bucato}\!$.

Let us focus on property~(c).
In the  case~(i),
when $w'=0$, then $q_f=0$ for all $f\in F\wo\{f^\bucato\}$ and $q_{f^\bucato}=\frac{\ls_\preceq(w)}{\ls_\preceq(f^\bucato)}$,
so that $q_{f^\bucato}f^\bucato=w$ and hence $\LT_\preceq(q_{f^\bucato}f^\bucato)=\LT_\preceq(w)$.
When $w'\ne 0$, by the induction hypothesis and by what we have said afore,
for all $f\in F\wo\{f^\bucato\}$ with $q_f f\ne 0$
we obtain $\LT_\preceq(q_f f)=\LT_\preceq(q'_f f)\leq\LT_\preceq(w')<\LT_\preceq(w)$,
while as for $f^\bucato\!$,
whenever $q_{f^\bucato}f^\bucato\ne\!0$,
using in addition~\ref{LTLT}
we get
$\LT_\preceq(q_{f^\bucato} f^\bucato)
\leq\max_\leq\{ \LT_\preceq(q'_{f^\bucato}f^\bucato),
                \LT_\preceq(\smash{\frac{\ls_\preceq(w)}{\ls_\preceq(f^\bucato)}f^\bucato}) \}
\leq\max_\leq\{ \LT_\preceq(w'), \LT_\preceq(w) \}
=\LT_\preceq(w)$
if $q'_{f^\bucato}f^\bucato\ne 0$,
and similarly 
$\LT_\preceq(q_{f^\bucato}f^\bucato)
=\LT_\preceq(\smash{\frac{\ls_\preceq(w)}{\ls_\preceq(f^\bucato)}f^\bucato})
=\LT_\preceq(w)$
if $q'_{f^\bucato}f^\bucato=0$.

In the case~(ii), when $w'=0$, then $q_f=0$ for all $f\in F$, so that (c) holds trivially.
When $w'\ne 0$, then  by the induction hypothesis  we have
$\LT_\preceq(q_f f)=\LT_\preceq(q'_f f)\leq\LT_\preceq(w')<\LT_\preceq(w)$ whenever $q_ff\ne 0$.
\end{proof}

\begin{pro}\label{restriction}
Let $L$ be an ideal of $W$, let ${\preceq}$ be a normal ordering of $W$ and let $B$ be a Gr\"obner basis of $L$
with respect to ${\preceq}$.
Let ${\preceq}'$ be a normal ordering of $W$ such that
${\preceq}'\res_{\Supp(B)}={\preceq}\res_{\Supp(B)}$.
Then $B$ is a Gr\"obner basis of $L$ with respect to ${\preceq}'$.
\end{pro}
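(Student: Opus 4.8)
The plan is to reduce the assertion to one application of the division theorem~\ref{divisione}, after first observing that the passage from $\preceq$ to $\preceq'$ does not alter the leading terms of the elements of $B$. Put $\leq':=\phi(\preceq')$. Since $\Supp(b)\subseteq\Supp(B)$ for every $b\in B$ and since $\preceq'$ and $\preceq$ coincide on $\Supp(B)$, they coincide on $\Supp(b)$; as the normal monomials occurring in the canonical form of $b$ are exactly those of $\Supp(b)$, this yields $\lt_{\preceq'}(b)=\lt_{\preceq}(b)$, and hence $\LT_{\preceq'}(b)=\LT_{\preceq}(b)$, for every $b\in B^\bucato$. Consequently $\langle\LT_{\preceq'}(b)\mid b\in B^\bucato\rangle=\langle\LT_{\preceq}(b)\mid b\in B^\bucato\rangle=\LT_{\preceq}(L)$, the last equality because $B$ is a Gr\"obner basis of $L$ with respect to $\preceq$. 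As $L=\sum_{b\in B}Wb$ does not depend on any ordering, it suffices to prove $\LT_{\preceq'}(L)=\LT_{\preceq}(L)$.

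One inclusion is immediate: every generator $\LT_{\preceq}(b)=\LT_{\preceq'}(b)$ of $\LT_{\preceq}(L)$ has $b\in B^\bucato\subseteq L^\bucato$, so it lies in $\LT_{\preceq'}(L)$ by definition. For the reverse inclusion, let $x\in L^\bucato$ and apply~\ref{divisione} with $w:=x$, $F:=B$, and the ordering $\preceq'$, obtaining $r\in W$ and $(q_b)_{b\in B}\in W^{\oplus B}$ with $x=\sum_{b\in B}q_bb+r$ such that $\LT_{\preceq'}(b)\nmid\Phi(s)$ for all $b\in B^\bucato$ and all $s\in\Supp(r)$, and $\LT_{\preceq'}(q_bb)\leq'\LT_{\preceq'}(x)$ whenever $q_bb\ne 0$. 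Since $x\in L$ and $B\subseteq L$, the remainder $r=x-\sum_{b\in B}q_bb$ lies in $L$. I claim $r=0$: otherwise $\lt_{\preceq}(r)\in\Supp(r)$ and $\Phi(\lt_{\preceq}(r))=\LT_{\preceq}(r)\in\LT_{\preceq}(L)=\langle\LT_{\preceq}(b)\mid b\in B^\bucato\rangle$; as this is a monomial ideal of $K[X,Y]$, the monomial $\Phi(\lt_{\preceq}(r))$ is divisible by $\LT_{\preceq}(b)=\LT_{\preceq'}(b)$ for some $b\in B^\bucato$, contradicting property~(b) of the division applied to this $b$ and $s:=\lt_{\preceq}(r)$. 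Hence $r=0$, i.e.~$x=\sum_{b\in B}q_bb$.

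It remains to place $\LT_{\preceq'}(x)$ among the generators. From $x=\sum_{b\in B}q_bb$ we get $\lt_{\preceq'}(x)\in\Supp(x)\subseteq\bigcup_{b\in B}\Supp(q_bb)$, so $\lt_{\preceq'}(x)\in\Supp(q_{b_0}b_0)$ for some $b_0\in B$ with $q_{b_0}b_0\ne 0$; then $\LT_{\preceq'}(x)\leq'\LT_{\preceq'}(q_{b_0}b_0)\leq'\LT_{\preceq'}(x)$ by property~(c), whence $\LT_{\preceq'}(x)=\LT_{\preceq'}(q_{b_0}b_0)=\LT_{\preceq'}(q_{b_0})\LT_{\preceq'}(b_0)$ by~\ref{LTLT}(b). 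Since $q_{b_0}b_0\ne 0$ forces $b_0\in B^\bucato$, this shows $\LT_{\preceq'}(x)\in\langle\LT_{\preceq'}(b)\mid b\in B^\bucato\rangle=\LT_{\preceq}(L)$. Hence $\LT_{\preceq'}(L)\subseteq\LT_{\preceq}(L)$, so $\LT_{\preceq'}(L)=\LT_{\preceq}(L)$, and $B$ is a Gr\"obner basis of $L$ with respect to $\preceq'$. I expect the only delicate point to be the interplay of the two orderings in the middle step: the division is performed for $\preceq'$, whereas the remainder is annihilated using the Gr\"obner property for $\preceq$, the bridge between them being precisely the invariance $\LT_{\preceq'}(b)=\LT_{\preceq}(b)$ on $B$; the rest is formal.
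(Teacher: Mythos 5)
Your proposal is correct and follows essentially the same route as the paper's own proof: divide $x\in L^\bucato$ by $B$ with respect to $\preceq'$ via Proposition~\ref{divisione}, kill the remainder using the Gr\"obner property for $\preceq$ together with the invariance $\LT_{\preceq'}(b)=\LT_{\preceq}(b)$ on $\Supp(B)$, and then extract $\LT_{\preceq'}(x)$ from one summand $q_{b_0}b_0$ via Lemma~\ref{LTLT}(b). Your framing through the equality $\LT_{\preceq'}(L)=\LT_{\preceq}(L)$ and the explicit support argument locating $b_0$ are only cosmetic variations on the paper's argument.
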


\begin{proof}
Let ${\leq}:=\phi(\preceq)$ and ${\leq}':=\phi(\preceq')$ be the induced monomial orderings of $K[X,Y]$.
Let $x\in L^\bucato$.
In view of~\ref{divisione},
we can write $x = \sum_{b\in B} q_b b + r$ for some  ${r\in W}$ and some ${(q_b)_{b\in B}\in W^{\oplus B}}$
enjoying the properties: 
(i)~${\LT_{\preceq'}(q_b b)\leq'\LT_{\preceq'}(x)}$ whenever ${q_b b\ne 0}$,
and
(ii)~${\LT_{\preceq'}(b)\nmid \Phi(s)}$ for all ${s\in\Supp(r)}$ whenever ${b\ne 0}$.

Clearly, $r\in L$.
Suppose that ${r\ne 0}$.
Then $\LT_\preceq(r)\in\LT_\preceq(L)$, thus the monomial $\LT_\preceq(r)$ lies in the monomial ideal 
$\langle\LT_\preceq(b)\mid b\in B^\bucato\rangle$ of $K[X,Y]$.
Hence there exists $b\in B^\bucato$ such that $\LT_\preceq(b)\mid\LT_\preceq(r)$, see~\cite[Lemma~2.4.2]{Cox}.
Since ${\preceq}$ and ${\preceq'}$ agree on $\Supp(B)$,
we have $\LT_{\preceq}(b)=\LT_{\preceq'}(b)$, and it follows
${\LT_{\preceq'}(b)\mid\LT_{\preceq}(r)\in\Phi(\Supp(r))}$,
in contradiction to~(ii).

Hence $r=0$.
So it follows from~(i) that $\Phi(x)=\sum_{b\in B}\Phi(q_b b)$ with $\LT_{\leq'}(\Phi(q_b b)) \leq' \LT_{\leq'}(\Phi(x))$
whenever $q_b b\ne 0$.
Thus there exists $b'\in B$ with $q_{b'}b'\!\ne 0$ such that $\LT_{\leq'}(\Phi(x))=\LT_{\leq'}(\Phi(q_{b'}b'))$,
i.e.~$\LT_{\preceq'}(x)=\LT_{\preceq'}(q_{b'} b')$.
We get $\LT_{\preceq'}(x)=\LT_{\preceq'}(q_{b'})\LT_{\preceq'}(b')$ by~\ref{LTLT}(b),
so
$\LT_{\preceq'}(x)\in\langle\LT_{\preceq'}(b')\rangle$.

We have shown that
$\LT_{\preceq'}(L)=\langle\LT_{\preceq'}(b)\mid b\in B^\bucato\rangle$.
Since clearly $L=\sum_{b\in B}Wb$, we conclude that $B$ is a Gr\"obner basis of $L$ with respect to~${\preceq'}$.
\end{proof}

\begin{lem}\label{open}
Let $L$ be a left ideal of $W$ and $F$ be a finite subset of $L$.
Then the set $\mathfrak{V}_L(F)$ of all normal orderings $\mathrm{\preceq}$ of $W$ such that $F$ is a Gr\"obner basis of $L$
with respect to $\mathrm{\preceq}$ is open in $\NO(N)$.
\end{lem}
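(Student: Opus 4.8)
The plan is to show that $\mathfrak{V}_L(F)$ is open by exhibiting, for each $\mathrm{\preceq}\in\mathfrak{V}_L(F)$, a basic open neighbourhood of $\mathrm{\preceq}$ inside $\NO(N)$ contained in $\mathfrak{V}_L(F)$. The natural candidate is the set of normal orderings agreeing with $\mathrm{\preceq}$ on the finite set $\Supp(F)$, and the key input is Proposition~\ref{restriction}: if $F$ is a Gr\"obner basis of $L$ with respect to $\mathrm{\preceq}$ and $\mathrm{\preceq}'\in\NO(N)$ satisfies $\mathrm{\preceq}'\res_{\Supp(F)}=\mathrm{\preceq}\res_{\Supp(F)}$, then $F$ is a Gr\"obner basis of $L$ with respect to $\mathrm{\preceq}'$ as well. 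Hence that agreement set is contained in $\mathfrak{V}_L(F)$, and it remains only to check it is open in $\NO(N)$.

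First I would dispose of the trivial case: if $\mathfrak{V}_L(F)=\emptyset$ there is nothing to prove, so assume $\mathrm{\preceq}\in\mathfrak{V}_L(F)$. Next I would write $\Supp(F)$ as a finite subset of $N$ and observe that the set
\[
\mathfrak{W}:=\{\mathrm{\preceq}'\in\TO(N)\mid \mathrm{\preceq}'\res_{\Supp(F)}=\mathrm{\preceq}\res_{\Supp(F)}\}
\]
equals $\bigcap_{(u,v)}\mathfrak{U}_{(u,v)}$, where the intersection runs over all pairs $(u,v)\in\Supp(F)\times\Supp(F)$ with $u\preceq v$; this is a finite intersection of subbasic open sets of $\TO(N)$, hence open in $\TO(N)$, so $\mathfrak{W}\cap\NO(N)$ is open in $\NO(N)$ and contains $\mathrm{\preceq}$. (This is exactly the kind of computation already carried out in the proof of Theorem~\ref{U=N}.) Then by Proposition~\ref{restriction} every $\mathrm{\preceq}'\in\mathfrak{W}\cap\NO(N)$ has $F$ as a Gr\"obner basis of $L$, i.e.\ $\mathfrak{W}\cap\NO(N)\subseteq\mathfrak{V}_L(F)$. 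Since $\mathrm{\preceq}$ was an arbitrary point of $\mathfrak{V}_L(F)$, the set $\mathfrak{V}_L(F)$ is a union of open subsets of $\NO(N)$, hence open.

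I do not expect a genuine obstacle here: the substantive work — that agreement on $\Supp(F)$ forces the Gr\"obner property to persist — has already been done in Proposition~\ref{restriction}, and the topological half is just the observation that "agreeing on a fixed finite set" is a finite intersection of the clopen subbasic sets $\mathfrak{U}_{(u,v)}$. The only point requiring a word of care is that Proposition~\ref{restriction} is stated for orderings agreeing on $\Supp(B)$ where $B$ is the given Gr\"obner basis, so one should make sure to apply it with $B=F$ and with $\Supp(B)=\Supp(F)$, which is immediate from the definitions.
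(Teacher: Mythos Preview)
Your proof is correct and follows essentially the same line as the paper's: show that around any $\mathrm{\preceq}\in\mathfrak{V}_L(F)$ the set of normal orderings agreeing with $\mathrm{\preceq}$ on the finite set $\Supp(F)$ is an open neighbourhood, and then invoke Proposition~\ref{restriction}. The only cosmetic difference is that the paper phrases the open neighbourhood as a metric ball $\mathfrak{N}_r(\preceq)\cap\NO(N)$ after choosing a filtration $(S_i)$ with $\Supp(F)\subseteq S_{r+1}$ (via Theorem~\ref{U=N}), whereas you write it directly as a finite intersection of the subbasic sets $\mathfrak{U}_{(u,v)}$; your route is marginally more direct since it avoids the auxiliary choice of a filtration.
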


\begin{proof}
Without restriction we assume that $\mathfrak{V}_L(F)\ne\emptyset$.
Let $\mathrm{\preceq}\in \mathfrak{V}_L(F)$.
So $F$ is a Gr\"obner basis of $L$ with respect to $\mathrm{\preceq}$.
Let $(S_i)_{i\in\NN_0}$ be a filtration of $N$ consisting of finite sets $S_i$.
We find $r\in\NN_0$ such that 
the finite subset $\Supp(F)$ of $N$ lies in $S_{r+1}$.
In the notation of~\ref{U=N},
consider the open neighbourhood ${\mathfrak{N}_r(\preceq)\cap\NO(N)}$
of $\mathrm{\preceq}$ in $\NO(N)$
and let ${\mathrm{\preceq'}\in {\mathfrak{N}_r(\preceq)\cap\NO(N)}}$.
So $\mathrm{\preceq'}$ and $\mathrm{\preceq}$ agree on~$S_{r+1}$ and in particular on $\Supp(F)$.
From~\ref{restriction} it follows that $F$ is a Gr\"obner basis of $L$ with respect to $\mathrm{\preceq}'$,
thus $\mathrm{\preceq}'\in \mathfrak{V}_L(F)$.
Therefore ${\mathfrak{N}_r(\preceq)\cap\NO(N)}\subseteq \mathfrak{V}_L(F)$, and hence $\mathfrak{V}_L(F)$ is open in $\NO(N)$.
\end{proof}

\begin{rem}\label{covering}
Let $L$ be a left ideal of $W$.
For each $\mathrm{\preceq}\in\NO(N)$ we can choose a Gr\"obner basis $B_\preceq$ of $L$ with respect to $\mathrm{\preceq}$
by~\ref{GB exist}.
Of course $\mathrm{\preceq}\in \mathfrak{V}_L(B_{\preceq})$.
Hence $(\mathfrak{V}_L(B_\preceq))_{\mathrm{\preceq}\in\NO(N)}$ is an open covering of $\NO(N)$ by~\ref{open}.
\end{rem}

\begin{dfn}
Let $L$ be a left ideal of $W$.
A finite subset $V$ of $L$ is a \concept{universal Gr\"obner basis} of $L$ if $V$ is a Gr\"obner basis of $L$
with respect to each normal ordering $\mathrm{\preceq}$ of $W$.
\end{dfn}

\begin{thm}\label{UGB exist}
Each left ideal $L$ of $W$ admits a universal Gr\"obner basis.
\end{thm}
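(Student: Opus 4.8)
The plan is to harvest the compactness of $\NO(N)$, established in Theorem~\ref{NO(N)}, together with the openness of the sets $\mathfrak{V}_L(F)$ from Lemma~\ref{open} and the trivial enlargement property of Remark~\ref{B U C}. First I would invoke Remark~\ref{covering}: for each normal ordering $\mathrm{\preceq}$ of $W$ choose, using Theorem~\ref{GB exist}, a Gr\"obner basis $B_\preceq$ of $L$ with respect to $\mathrm{\preceq}$, and note that $(\mathfrak{V}_L(B_\preceq))_{\mathrm{\preceq}\in\NO(N)}$ is then an open covering of $\NO(N)$.

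Since $\NO(N)$ is compact, this covering admits a finite subcovering, say $\mathfrak{V}_L(B_{\preceq_1}),\ldots,\mathfrak{V}_L(B_{\preceq_m})$ for finitely many normal orderings $\mathrm{\preceq}_1,\ldots,\mathrm{\preceq}_m$ of $W$. I would then put $V:=B_{\preceq_1}\cup\cdots\cup B_{\preceq_m}$, which is a finite subset of $L$.

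It remains to verify that $V$ is a Gr\"obner basis of $L$ with respect to \emph{every} normal ordering. Given an arbitrary $\mathrm{\preceq}\in\NO(N)$, the covering property yields an index $j$ with $\mathrm{\preceq}\in\mathfrak{V}_L(B_{\preceq_j})$, so $B_{\preceq_j}$ is a Gr\"obner basis of $L$ with respect to $\mathrm{\preceq}$. Since $V$ is a finite subset of $L$ containing $B_{\preceq_j}$, Remark~\ref{B U C} shows that $V$ too is a Gr\"obner basis of $L$ with respect to $\mathrm{\preceq}$. As $\mathrm{\preceq}$ was arbitrary, $V$ is a universal Gr\"obner basis of $L$.

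I do not expect a genuine obstacle in this last step: all the substance has already been front-loaded, namely into the compactness of $\NO(N)$ (which rests on the topological work of Section~\ref{sec1} and the closedness results \ref{closed} and \ref{NO(N)}), into the division theorem \ref{divisione} and the stability statement \ref{restriction} underlying the openness of $\mathfrak{V}_L(F)$ in \ref{open}, and into the existence of a single Gr\"obner basis in \ref{GB exist}. What is left is merely the short Heine--Borel-style extraction of a finite subcover and the observation that a finite union of Gr\"obner bases remains a Gr\"obner basis for each ordering that already had one of the pieces as such.
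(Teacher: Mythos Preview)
Your argument is correct and follows essentially the same route as the paper's own proof: invoke Remark~\ref{covering} to get the open cover $(\mathfrak{V}_L(B_\preceq))_{\preceq\in\NO(N)}$, use compactness of $\NO(N)$ from Theorem~\ref{NO(N)} to extract a finite subcover, take the union $V$ of the corresponding finitely many Gr\"obner bases, and conclude via Remark~\ref{B U C}. There is nothing to add or correct.
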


\begin{proof}
By~\ref{covering} we can choose an open covering $(\mathfrak{V}_L(B_\preceq))_{\mathrm{\preceq}\in\NO(N)}$
of $\NO(N)$ where each $B_\preceq$ is a Gr\"obner basis of $L$ with respect to~$\mathrm{\preceq}$.
Since $\NO(N)$ is compact, see~\ref{NO(N)},
we find a finite subcovering $(\mathfrak{V}_L(B_{\preceq_k}))_{1\le k\le t}$
with $t\in\NN$.
We claim that $V:=\bigcup_{1\le k\le t} B_{\preceq_k}$ is a universal Gr\"obner basis of~$L$.
Indeed, let $\mathrm{\preceq}_0\in\NO(N)$.
As $\NO(N)=\bigcup_{1\le k\le t}\mathfrak{V}_L(B_{\preceq_k})$, we have $\mathrm{\preceq}_0\in\mathfrak{V}_L(B_{\preceq_k})$
for some $k\in\{1,\ldots,t\}$.
Thus $B_{\preceq_k}$ is a Gr\"obner basis of $L$ with respect to $\mathrm{\preceq}_0$.
From~\ref{B U C} it follows that $V$ is a Gr\"obner basis of $L$ with respect to~$\mathrm{\preceq}_0$.
As the choice of ${\preceq}_0$ in $\NO(N)$ was arbitrary, we conclude that $V$ is a universal Gr\"obner basis of~$L$.
\end{proof}


\vskip\baselineskip

\renewcommand{\bibname}{REFERENCES}
\renewcommand{\refname}{REFERENCES}

\end{document}